\newtheorem{thm}{Theorem}[section]
\newtheorem{cor}[thm]{Corollary}
\newtheorem{lem}[thm]{Lemma}
\newtheorem{prop}[thm]{Proposition}
\theoremstyle{definition}
\newtheorem{defn}[thm]{Definition}
\newtheorem{exm}[thm]{Example}
\newtheorem{rem}[thm]{Remark}
\DeclareMathOperator{\lk}{lk}
\DeclareMathOperator{\st}{st}
\DeclareMathOperator{\core}{core}
\DeclareMathOperator{\im}{im}
\def\k{{\widetilde\chi}}
\def\h{{\widetilde H}}
\newcommand{\0}{\bold{0}}
\begin{document}

\title[A Characterization of Gorenstein Planar graphs] {A Characterization of Gorenstein Planar graphs}

\author{ Tran Nam Trung}
\address{Institute of Mathematics, VAST, 18 Hoang Quoc Viet, Hanoi, Viet Nam}
\email{tntrung@math.ac.vn}

\subjclass{13D45, 05C25, 15A69.}
\keywords{Graph, Edge ideal, Independence complex, Gorenstein}
\date{}
\dedicatory{}
\commby{}
\begin{abstract}

We prove that a planar graph is Gorenstein if and only if its independence complex is Eulerian.

\end{abstract}
\maketitle
\section*{Introduction}

Let $\Delta$ be a {\it simplicial complex} on the vertex set $V =\{1,\ldots,n\}$ where $n\geqslant 1$. Thus, $\Delta$ is a collection of subsets of $V$ closed under taking subsets; that is, if $\sigma \in \Delta$ and $\tau\subseteq \sigma$ then $\tau\in \Delta$. Let $k$ be a field and $R = k[x_1,\ldots,x_n]$ a polynomial ring over $k$. Let $I_{\Delta}$ be the ideal of $R$ generated all square-free monomials $x_{i_1}\cdots x_{i_s}$ such that $\{i_1,\ldots,i_s\}\notin \Delta$, i.e. a non-face of $\Delta$; this is called the {\it Stanley-Reisner} ideal of $\Delta$. Then, we say that $\Delta$ is {\it Cohen-Macaulay} (resp. {\it Gorenstein}) over $k$ if so is $R/I_{\Delta}$. One of the main problem is to characterize the Cohen-Macaulay and the Gorenstein property of $\Delta$ from the combinatorial data of $\Delta$ (see \cite{S}). For example,  let $|\Delta|$ denote the underlying topological space of $\Delta$, as defined in topology. Then, $\Delta$ is Gorenstein whenever $|\Delta|$ is isomorphic to a {\it sphere} according to \cite[Corollary 5.2]{S}. In general, these properties depend on the characteristic of  the base field $k$ (see \cite{R, GR}).

We now move on to {\it flag complexes}. A simplicial complex is a flag complex if all of its minimal non-faces are two element sets. Equivalently, if all of the edges of a potential face of a flag complex are in the complex, then that face must also be in the complex. Flag complexes are closely related to graphs. All graphs we consider in this paper are finite, not null and simple without isolated vertices. 

Let $G$ be a graph with vertex set $V(G) =\{1,\ldots,n\}$ and edge set $E(G)$. An {\it independent set} in $G$ is a set of vertices no two of which are adjacent to each other. The {\it independence complex} of  $G$, denoted by $\Delta(G)$, is the set of independent sets of $G$. We now associate to the graph $G$ a quadratic squarefree monomial ideal in $R$  $$I(G) = (x_ix_j  \mid ij \in E(G)),$$ which is called the {\it edge ideal} of $G$ (see \cite{Vi2}). We can see that
$$I_{\Delta(G)} = I(G),$$
and thus $\Delta(G)$ is a flag complex. Conversely, any flag complex is the independence complex of some graph. We say that $G$ is Cohen-Macaulay (resp. Gorenstein) over $k$ if so is $R/I(G)$. 

 A graph $G$ is called {\it well-covered} if very maximal independent set has the same size, that is $\alpha(G)$, the independence number of $G$. A well-covered graph $G$ is said to be a member of the class $W_2$ if $G\setminus v$ is well-covered with $\alpha(G\setminus v) = \alpha(G)$ for every vertex $v$ (see \cite{PL, Sp}). At first sight, if $G$ is Gorenstein then $G$ is in $W_2$ (see \cite[Lemma $2.4$]{HT}). In general we cannot read off the Gorenstein property of a graph just from its structure since this property as usual depends on the characteristic of $k$ (see \cite[Proposition $3.1$]{HT}), so now we focus on some classes of graphs such as (see \cite{HH,HHZ, HT}): 
\begin{enumerate}
\item A bipartite graph is Gorenstein if and only if it is consists of disjoint edges.
\item A chordal graph is Gorenstein if and only if it is consists of disjoint edges.
\item A triangle-free graph is Gorenstein if and only if it is a member of $W_2$.
\end{enumerate}

The main theme of this paper is to characterize Gorenstein graphs among {\it planar graphs}. A graph which can be drawn in the plane in such a way that edges meet only at points corresponding to their common ends is called a planar graph. In fact we deal with a larger class of graphs which we call {\it pseudo-planar graphs}. Before stating the result we need some terminology from Graph theory.  Two vertices which are incident with a common edge are {\it adjacent}, and two distinct adjacent vertices are {\it neighbors}. The set of neighbors of a vertex $v$ in a graph $G$ is denoted by $N_G(v)$. An {\it independent set} in $G$ is a set of vertices no two of which are adjacent to each other. The independence complex of  $G$, denoted by $\Delta(G)$, is the set of independent sets of $G$. For $S\in\Delta(G)$ we denote the neighbors of $S$ by 
$$N_G(S) := \{x \in V(G)\setminus S \mid N_G(x) \cap S \ne \emptyset\}$$
and the {\it localization} of $G$ with respect to $S$ by $G_S := G\setminus (S\cup N_G(S))$. Let $G[N_G(v)]$ be the induced subgraph of $G$ on the neighbors of $v$. When we remove all isolated vertices of $G[N_G(v)]$, the order of the remaining graph denoted by $\delta^*(v)$. Let $$\delta^*(G) :=\min\{\delta^*(v) \mid v \in V(G)\}.$$ 

The union of $m$ copies of disjoint graphs isomorphic to $G$ is denoted by $mG$. The {\it join} of two disjoint graphs $G$ and $H$ is denoted by $G * H$. The complete graph on $n$ vertices is denoted by $K_n$. In order to generalize the notion of triangle-free graphs and planar graphs we define {\it pseudo-planar graphs} as follows:

\begin{defn} A graph $G$ is pseudo-planar if every $S\in\Delta(G)$ satisfies:
\begin{enumerate}
\item $\delta^*(G_S) \leqslant 5$;
\item $G_S$ is not isomorphic to $2K_2 * 2K_2$.
\end{enumerate}
\end{defn}

Let $G$ be a graph and $\Delta := \Delta(G)$. From Lemmas $\ref{Reisner}$ and $\ref{Stanley}$, we see that  $G$ is Gorenstein if and only if:
\begin{enumerate}
\item [(i)]  For all $F\in \Delta$ and all $i < \dim(\lk_{\Delta} F)$, we have $\widetilde{H}_i(\lk_{\Delta} F; k) = \0$; and
\item [(ii)] $\Delta$ is Eulerian.
\end{enumerate}

The main result of the paper shows that the combinatorial condition (ii) implies the algebraic condition (i) whenever $G$ is pseudo-planar. In other words, we have a characterization of pseudo-planar graphs (and so of planar graphs, see Remark $\ref{PG}$):

\medskip

\noindent{\bf Theorem \ref{GlobalVersion}} {\it Let $G$ be a pseudo-planar graph. Then, $G$ is Gorenstein if and only if $\Delta(G)$ is Eulerian.
}

\medskip

The paper is organized as follows. In the section $1$ we set up some basic notations, terminology for graph and the simplicial complex. In the section $2$, we compute the Euler characteristics of the independence complexes of locally Gorenstein graphs. In the last section we characterize Gorenstein pseudo-planar graphs. 

\section{Preliminaries}

Let $\Delta$ be a simplicial complex on the vertex set $V(\Delta)= \{1,\ldots,n\}$. We define the {\it Stanley-Reisner} ideal of the simplicial complex $\Delta$ to be the squarefree monomial ideal
$$I_{\Delta} = (x_{j_1} \cdots x_{j_i} \mid j_1  <\cdots < j_i \ \text{ and } \{j_1,\ldots,j_i\} \notin \Delta) \ \text{ in } R = k[x_1,\ldots,x_n]$$
and the {\it Stanley-Reisner} ring of $\Delta$ to be the quotient ring $k[\Delta] = R/I_{\Delta}$. Then, $\Delta$ is Cohen-Macaulay (resp. Gorenstein) if $k[\Delta]$ is Cohen-Macaulay (resp. Gorenstein). 

If $F\in\Delta$, we define the dimension of $F\in\Delta$ to be $\dim F = |F| -1$ and the dimension of $\Delta$ to be $\dim\Delta = \max\{\dim F \mid F \in \Delta\}$. The link of $F$ inside $\Delta$ is its subcomplex:
$$\lk_{\Delta}  F = \{H\in \Delta \mid H\cup F\in \Delta \ \text{ and } H\cap F=\emptyset\}.$$

For each $i$, let $\mathcal {\widetilde C}_i(\Delta;k)$ the vector space over $k$ whose basis elements are the exterior products
 $e_F = e_{j_0} \wedge \cdots \wedge e_{j_i}$ that correspond to $i$-faces $F = \{j_0,\ldots,j_i\}\in \Delta$ with $j_0 < \cdots < j_i$. The reduced chain complex of $\Delta$ over $k$ is the complex $\mathcal {\widetilde C}_{\bullet}(\Delta;k)$ whose  differentials $\partial_i: \mathcal {\widetilde C}_i(\Delta;k) \ \longrightarrow  \mathcal {\widetilde C}_{i-1}(\Delta;k)$  is given by
$$ \partial_i(\ e_{j_0}\wedge \cdots\wedge e_{j_i}) = \sum_{s=0}^i (-1)^se_{j_0} \wedge \cdots \wedge \widehat{e}_{j_s} \wedge \cdots \wedge e_{j_i},$$
and the $i$-th homology group of $\Delta$ is $\h_i(\Delta;k)=\ker(\partial_{i})/ \im(\partial_{i+1})$. For simplicity, if $\omega\in \mathcal {\widetilde C}_i(\Delta;k)$, we write $\partial \omega$ stands for $\partial_i \omega$.  With this notation we have
\begin{equation}\label{DF}\partial  (\omega \wedge \nu) =\partial  \omega \wedge \nu + (-1)^{i+1} \omega \wedge \partial  \nu \ \text{ for all } \omega \in
\mathcal {\widetilde C}_i(\Delta;k) \text{ and } \nu \in \mathcal {\widetilde C}_j(\Delta;k).\end{equation}

The most widely used criterion for determining when a simplicial complex is Cohen-Macaulay is due to Reisner (see \cite[Corollary $4.2$]{S}), which says that links have only top homology.

\begin{lem} \label{Reisner} $\Delta$ is Cohen-Macaulay over $k$ if and only if for all $F\in \Delta$ and all $i < \dim(\lk_{\Delta} F)$, we have $\widetilde{H}_i(\lk_{\Delta} F; k) = \0$.
\end{lem}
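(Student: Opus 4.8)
The plan is to prove this criterion through commutative algebra rather than combinatorially. Write $d=\dim k[\Delta]=\dim\Delta+1$ and let $\mathfrak m=(x_1,\dots,x_n)$ be the irrelevant maximal ideal of $R$. The first step is the depth--dimension characterization of Cohen--Macaulayness via local cohomology: since $\operatorname{depth}k[\Delta]$ is the least index $i$ with $H^i_{\mathfrak m}(k[\Delta])\neq\0$, while one always has $\operatorname{depth}\leqslant d$ and $H^d_{\mathfrak m}(k[\Delta])\neq\0$, the ring $k[\Delta]$ is Cohen--Macaulay if and only if $H^i_{\mathfrak m}(k[\Delta])=\0$ for every $i<d$. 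Thus the task reduces to reading off the vanishing of these local cohomology modules from the combinatorics of $\Delta$.

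The heart of the argument is Hochster's formula, which I would establish by computing $H^{\bullet}_{\mathfrak m}(k[\Delta])$ with the $\z^n$-graded stable Koszul (\v{C}ech) complex on $x_1,\dots,x_n$. The essential point is that, for a fixed multidegree $-\mathbf a$ with $\mathbf a\in\z_{\geqslant 0}^n$, the corresponding graded strand of this complex is isomorphic, after a homological shift by $|F|+1$ where $F=\operatorname{supp}(\mathbf a)$, to the augmented chain complex computing the reduced homology of $\lk_\Delta F$; in every multidegree whose support is not a face the strand is acyclic. Passing to cohomology yields the $k$-vector space isomorphism
\[ H^i_{\mathfrak m}(k[\Delta])_{-\mathbf a}\;\cong\;\h_{\,i-|F|-1}(\lk_\Delta F; k),\qquad F=\operatorname{supp}(\mathbf a)\in\Delta, \]
and $\0$ when $\operatorname{supp}(\mathbf a)\notin\Delta$. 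Grouping degrees by their support, $H^i_{\mathfrak m}(k[\Delta])=\0$ holds if and only if $\h_{i-|F|-1}(\lk_\Delta F;k)=\0$ for every face $F\in\Delta$.

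It remains to reconcile the two index ranges. Putting $j=i-|F|-1$, the vanishing ``$H^i_{\mathfrak m}(k[\Delta])=\0$ for all $i<d$'' reads ``$\h_j(\lk_\Delta F;k)=\0$ for every $F\in\Delta$ and every $j<\dim\Delta-|F|$''. Since a face of $\lk_\Delta F$ is disjoint from $F$ and its union with $F$ is a face of $\Delta$, we have $\dim\lk_\Delta F\leqslant\dim\Delta-|F|$; as reduced homology vanishes above the dimension of a complex, this immediately yields the lemma's condition that $\h_j(\lk_\Delta F;k)=\0$ for $j<\dim\lk_\Delta F$. For the converse I would show that the lemma's condition already forces $\Delta$ to be pure---by inducting on $\dim\Delta$, using the identity $\lk_{\lk_\Delta v}(H)=\lk_\Delta(H\cup\{v\})$ so that every iterated link is again a link of $\Delta$, and using that the case $F=\emptyset$ makes $\Delta$ connected---whence $\dim\lk_\Delta F=\dim\Delta-|F|$ for all $F$ and the two ranges coincide. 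On the Cohen--Macaulay side purity is automatic, since a Cohen--Macaulay ring is unmixed and the minimal primes of $I_\Delta$ correspond to the facets of $\Delta$.

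The main obstacle is Hochster's formula itself: identifying the degree-$(-\mathbf a)$ strand of the \v{C}ech complex with the shifted reduced chain complex of $\lk_\Delta F$, and tracking the homological index together with the internal $\z^n$-grading consistently through that shift. Once this identification is secured, the depth reduction and the purity step matching $\dim\lk_\Delta F$ with $\dim\Delta-|F|$ are routine, and the entire computation is performed over the fixed field $k$, so the characteristic enters only through the reduced homology of the links---exactly as the statement ``over $k$'' records, and as in \cite[Corollary 4.2]{S}.
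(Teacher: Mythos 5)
The paper offers no proof of this lemma---it is quoted directly from Stanley \cite[Corollary~4.2]{S}---and your proposal reconstructs essentially the proof given in that reference: reduce Cohen--Macaulayness to the vanishing of $H^i_{\mathfrak m}(k[\Delta])$ for $i<d$, apply Hochster's formula (proved via the $\z^n$-graded \v{C}ech complex) to translate this into vanishing of reduced (co)homology of links, and reconcile the two index ranges by showing Reisner's condition forces purity. The outline is correct; the only points left implicit are the vanishing of $H^i_{\mathfrak m}(k[\Delta])$ in multidegrees having a positive coordinate, and the path-of-facets argument that upgrades connectedness of $\Delta$ and purity of all vertex links to purity of $\Delta$---both routine.
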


Let $f_i$ be the number of $i$-dimensional faces of $\Delta$. Then the reduced Euler characteristic $\k(\Delta)$ of $\Delta$ is defined by
$$\k(\Delta) :=\sum_{i=-1}^d (-1)^i f_i = \sum_{F\in\Delta} (-1)^{|F|-1},$$
where $d=\dim(\Delta)$. This number can be represented via the reduced homology groups of $\Delta$ by:
$$\k(\Delta) = \sum_{i=-1}^d (-1)^i\dim_k\h_{i}(\Delta;k).$$

The restriction of $\Delta$ to a subset $S$ of $V(\Delta$) is $\Delta|_{S} :=\{F\in\Delta \mid F \subseteq S\}$. The star of a vertex $v$ in $\Delta$ is $\st_{\Delta}(v) :=\{F\in\Delta \mid F\cup \{v\}\in\Delta\}$. Let $\core(V(\Delta)) := \{x\in V(\Delta) \mid \st_{\Delta}(x) \ne V(\Delta)\}$, then the core of $\Delta$ is $\core(\Delta):= \Delta|_{\core(V(\Delta))}$. If $\Delta =\st_{\Delta}(v)$ for some vertex $v$, then $\Delta$ is a cone over $v$. Thus $\Delta=\core(\Delta)$ means $\Delta$ is not a cone.

Let $\Delta$ be a pure simplicial complex, i.e. every facet of $\Delta$ has the same cardinality. We say that $\Delta$ is an {\it Euler complex} if
$$\k(\lk_{\Delta} F) = (-1)^{\dim \lk_{\Delta} F} \text{ for all } F\in \Delta;$$ 
and $\Delta$ is a {\it semi-Euler complex} if $\lk_{\Delta}(x)$ is an Euler complex for all vertex $x$.

We then have a criterion for determining when Cohen-Macaulay complexes $\Delta$ are Gorenstein due to Stanley (see \cite[Theorem $5.1$]{S}).

\begin{lem} \label{Stanley} $\Delta$ is Gorenstein if and only if and only if $\core(\Delta)$ is an Euler complex which is Cohen-Macaulay.
\end{lem}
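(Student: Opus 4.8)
The plan is to reduce to the case $\Delta=\core(\Delta)$, then to rephrase ``Cohen-Macaulay Euler complex'' as a pointwise statement about the reduced homology of the links of $\Delta$, and finally to match that homological statement with the Gorenstein property through the canonical module of $k[\Delta]$. I would first dispose of the cone points. If a vertex $v$ satisfies $\st_{\Delta}(v)=V(\Delta)$, then $v$ lies in no minimal non-face, so $x_v$ divides no minimal generator of $I_{\Delta}$ and $k[\Delta]\cong k[\Delta\setminus v][x_v]$ is a polynomial extension of the Stanley-Reisner ring of the deletion $\Delta\setminus v$. Since a graded $k$-algebra $A$ is Gorenstein (respectively Cohen-Macaulay) if and only if $A[x]$ is, removing cone points one at a time shows that $\Delta$ is Gorenstein if and only if $\core(\Delta)$ is Gorenstein, and likewise for Cohen-Macaulayness. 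Hence it suffices to prove, for a complex $\Gamma$ with $\Gamma=\core(\Gamma)$, that $\Gamma$ is Gorenstein if and only if $\Gamma$ is a Cohen-Macaulay Euler complex.

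Next I would convert the right-hand side into link homology. By Reisner's criterion (Lemma \ref{Reisner}), $\Gamma$ is Cohen-Macaulay exactly when $\widetilde{H}_i(\lk_{\Gamma}F;k)=0$ for every $F\in\Gamma$ and every $i<\dim(\lk_{\Gamma}F)$; in particular $\Gamma$ is then pure, so the Euler-complex condition is meaningful. When Cohen-Macaulayness holds the homology of each link sits only in top degree, so the identity $\k(\lk_{\Gamma}F)=\sum_i(-1)^i\dim_k\widetilde{H}_i(\lk_{\Gamma}F;k)$ collapses to $\k(\lk_{\Gamma}F)=(-1)^{\dim\lk_{\Gamma}F}\dim_k\widetilde{H}_{\dim\lk_{\Gamma}F}(\lk_{\Gamma}F;k)$. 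Thus, given Cohen-Macaulayness, the requirement $\k(\lk_{\Gamma}F)=(-1)^{\dim\lk_{\Gamma}F}$ for all $F$ is equivalent to $\dim_k\widetilde{H}_{\dim\lk_{\Gamma}F}(\lk_{\Gamma}F;k)=1$ for all $F$. So the lemma reduces to a single clean equivalence: $\Gamma$ is Gorenstein if and only if, for every $F\in\Gamma$, the reduced homology $\widetilde{H}_i(\lk_{\Gamma}F;k)$ vanishes for $i<\dim\lk_{\Gamma}F$ and equals $k$ for $i=\dim\lk_{\Gamma}F$.

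The crux---and the step I expect to be the main obstacle---is this remaining equivalence, which is purely algebraic. I would run it through the canonical module $\omega$ of the Cohen-Macaulay ring $k[\Gamma]$: by definition $k[\Gamma]$ is Gorenstein precisely when $\omega\cong k[\Gamma]$ up to a degree shift, equivalently when its Cohen-Macaulay type, the minimal number of generators of $\omega$, equals $1$. Graded local duality identifies $\omega$ with the Matlis dual of the top local cohomology module $H^{\dim\Gamma+1}_{\mathfrak{m}}(k[\Gamma])$, and Hochster's formula expresses the multigraded pieces of this module through the integers $\dim_k\widetilde{H}_{\dim\lk_{\Gamma}F}(\lk_{\Gamma}F;k)$ indexed by the faces $F$. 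For the forward implication I would argue by induction on $\dim\Gamma$: localizing $k[\Gamma]$ at the monomial prime of a nonempty face $F$ yields, up to a Laurent-polynomial factor, the Stanley-Reisner ring of $\lk_{\Gamma}F$, and as Gorensteinness passes to localizations this link is Gorenstein of strictly smaller dimension, so the inductive hypothesis makes its homology sphere-like; the remaining case $F=\emptyset$ follows because the degree-zero piece of $H^{\dim\Gamma+1}_{\mathfrak{m}}(k[\Gamma])$ is $\widetilde{H}_{\dim\Gamma}(\Gamma;k)$ and lies entirely in the socle, so the type-one condition pins it down to $k$. For the converse I would check that, once every link (including $\Gamma$ itself) has sphere-like homology, the multiplication-by-$x_j$ maps between the graded pieces of $H^{\dim\Gamma+1}_{\mathfrak{m}}(k[\Gamma])$ annihilate all contributions from nonempty supports, leaving a one-dimensional socle; thus the type equals $1$ and $k[\Gamma]$ is Gorenstein. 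The delicate points are the exact Hochster-type identification of these graded pieces, the compatibility of localization with passage to links, and the socle bookkeeping; these are standard commutative-algebra facts (see \cite{S}), but assembling them correctly is where the real work lies.
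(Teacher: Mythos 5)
The paper offers no proof of this lemma: it is quoted verbatim from Stanley (\cite[Theorem 5.1]{S}), so there is no internal argument to compare against, and what you have written is in effect a sketch of the proof behind that citation. Your preliminary reductions are correct and complete: a cone point $v$ lies in no minimal non-face, so $k[\Delta]\cong k[\Delta\setminus v][x_v]$ and both Gorensteinness and Cohen--Macaulayness pass back and forth across polynomial extensions, reducing to $\Gamma=\core(\Gamma)$; and, given Reisner's criterion, purity plus concentration of link homology in top degree correctly converts the Euler condition $\widetilde{\chi}(\lk_{\Gamma}F)=(-1)^{\dim \lk_{\Gamma}F}$ into $\dim_k\widetilde{H}_{\dim\lk_{\Gamma}F}(\lk_{\Gamma}F;k)=1$. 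Up to that point your argument is exactly the standard one.

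The crux, however, is only gestured at, and two of your statements there are genuinely defective rather than merely terse. First, in the forward direction your induction via localization shows that $\lk_{\Gamma}F$ is Gorenstein, but the inductive hypothesis then yields sphere-like homology only for $\core(\lk_{\Gamma}F)$; if $\lk_{\Gamma}F$ were a cone, its top homology would vanish and the Euler condition would fail, so you must separately prove that in a Gorenstein complex with $\Gamma=\core(\Gamma)$ no link of a nonempty face is a cone --- this is not automatic and is precisely where Stanley's fine Hilbert-series bookkeeping (equivalently, showing the $a$-invariant of $k[\Gamma]$ is zero when $\Gamma=\core(\Gamma)$) does real work. The same issue infects your base case $F=\emptyset$: the type-one condition gives $\dim_k\widetilde{H}_{\dim\Gamma}(\Gamma;k)\leqslant 1$, since the degree-zero piece of $H^{\dim\Gamma+1}_{\mathfrak{m}}(k[\Gamma])$ lies in the socle, but equality requires excluding that the one-dimensional socle lives in a strictly negative multidegree, which again needs the no-cone input. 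Second, your converse is stated backwards: if the multiplication maps $x_j$ \emph{annihilated} the graded pieces supported on nonempty faces, those pieces would lie \emph{in} the socle and force the type above one; what you need is that these maps are injective on such pieces, and Hochster's formula gives only the dimensions of the graded components, not the $x_j$-action --- for that you need the explicit $\mathbb{Z}^n$-graded module structure of top local cohomology (Gr\"{a}be's description) or Stanley's alternative route through the symmetry of the fine Hilbert series of the canonical module. The strategy is the right one and is completable along the lines of \cite{S}, but as written these two steps do not go through.
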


Let $S$ be a subset of the vertex set of $\Delta$ and let $\Delta\setminus S:=\{F\in\Delta \ | F \cap S =\emptyset\}$, so that $\Delta\setminus S$ is  a subcomplex of $\Delta$. If $S =\{x\}$, then we write $\Delta\setminus x$ stands for $\Delta\setminus \{x\}$. Clearly, $\Delta\setminus x = \{F\in\Delta \ | \ x\notin F\}$.

The following lemma is the key to investigate Cohen-Macaulay simplicial complexes in the sequence of this paper (see \cite[Lemma $1.4$]{HT}).

\begin{lem} \label{ZV} Let $\Delta$ be a Gorenstein simplicial complex with $\Delta = \core(\Delta)$. If $S$ is a subset of $V(\Delta)$ such that $\Delta|_S$ is a cone, then $\h_i(\Delta \setminus S,k) = 0$ for all $i$.
\end{lem}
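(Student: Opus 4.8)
The plan is to deduce the statement from Alexander duality, using that a Gorenstein complex equal to its own core is a homology sphere. First I would unwind the hypotheses through the two criteria already recorded. Since $\Delta=\core(\Delta)$ is Gorenstein, Lemma~\ref{Stanley} shows $\Delta$ is a Cohen--Macaulay Euler complex. Reisner's criterion (Lemma~\ref{Reisner}) then forces $\h_i(\lk_\Delta F;k)=\0$ for every face $F$ and every $i<\dim\lk_\Delta F$, so the homology of each link is concentrated in top degree; combining this with the Euler condition $\k(\lk_\Delta F)=(-1)^{\dim\lk_\Delta F}$ gives $\dim_k\h_{\dim\lk_\Delta F}(\lk_\Delta F;k)=1$. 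Applying this to $F=\emptyset$, where $\lk_\Delta\emptyset=\Delta$, and to all faces, I conclude that $\Delta$ is a $k$-homology $d$-sphere, where $d=\dim\Delta$: every link, including $\Delta$ itself, has the reduced homology of a sphere of its own dimension. This structural input is what makes the hypothesis $\Delta=\core(\Delta)$ indispensable: dropping it would allow $\Delta$ to be a cone, hence contractible, and the conclusion would genuinely fail (take $S=\{v\}$ with $v$ an apex).

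Next I would rewrite the two complexes in the statement as complementary full (induced) subcomplexes of the sphere $\Delta$. Directly from the definitions, $\Delta\setminus S=\{F\in\Delta\mid F\cap S=\emptyset\}=\Delta|_{V(\Delta)\setminus S}$, while $\Delta|_S$ is the induced subcomplex on the complementary vertex set $S$. The key step is then Alexander duality inside the homology sphere $\Delta$: working over the field $k$, for complementary full subcomplexes one has $\dim_k\h_i(\Delta|_{V(\Delta)\setminus S};k)=\dim_k\h_{d-1-i}(\Delta|_S;k)$ for all $i$ (homology and cohomology in the same degree agree in dimension over a field). Finally, $\Delta|_S$ is a cone by hypothesis, hence contractible, so $\h_j(\Delta|_S;k)=\0$ for all $j$; the displayed equality of dimensions then forces $\h_i(\Delta\setminus S;k)=\0$ for all $i$, which is exactly the claim.

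The main obstacle is setting up this sphere-form of Alexander duality cleanly, since it is duality inside $|\Delta|\cong S^d$ rather than the ambient-simplex version. I would justify it either topologically --- Alexander duality in $S^d$ relates $\h_i(\Delta|_W;k)$ to the cohomology of the complement of the realization of $\Delta|_W$, together with the standard fact that this complement deformation retracts onto the realization of the complementary full subcomplex $\Delta|_{V(\Delta)\setminus W}$ --- or combinatorially, by running the relative reduced chain complexes and their long exact sequences while carefully tracking the degree shift by $d-1$ (the boundary of a triangle with $W$ a two-element set is a convenient index check). A more self-contained route avoids duality altogether: one shows directly that $\Delta\setminus v$ is acyclic for a homology sphere (Mayer--Vietoris with $\st_\Delta(v)$ and $\lk_\Delta(v)$), and then peels off the remaining vertices of $S$ one at a time, using that deleting a vertex with acyclic link from an acyclic complex preserves acyclicity. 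The delicate point in that approach is that the relevant links are links of $\Delta$, which are again homology spheres but may themselves be cones, so the induction must either treat the cone case separately (a cone stays a cone, hence acyclic, after deleting vertices away from its apex) or invoke the same duality on the smaller sphere. For this reason I would present the Alexander duality argument as the primary proof.
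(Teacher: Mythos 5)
The paper does not actually prove this lemma: it is quoted verbatim from \cite[Lemma 1.4]{HT}, and the standard proof of that cited result is precisely the Alexander-duality argument you give, namely that $\Delta=\core(\Delta)$ Gorenstein makes $\Delta$ a $k$-homology sphere of dimension $d$, and duality between the complementary induced subcomplexes $\Delta|_S$ and $\Delta\setminus S=\Delta|_{V(\Delta)\setminus S}$ transfers the acyclicity of the cone $\Delta|_S$ to $\Delta\setminus S$. Your proposal is correct; the only point that must be pinned down is that the duality $\dim_k\h_i(\Delta\setminus S;k)=\dim_k\h_{d-1-i}(\Delta|_S;k)$ has to be invoked in its combinatorial form for Gorenstein complexes with $\Delta=\core(\Delta)$ over $k$ (obtainable, e.g., from Hochster's formula together with the self-duality of the minimal free resolution of $k[\Delta]$), rather than as literal topological duality in $S^d$, since $|\Delta|$ need not be homeomorphic to a sphere --- a caveat you already identify and address.
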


Let $G$ be a graph. Then, $G$ is well-covered if and only if $\Delta(G)$ is pure; and $\Delta(G) = \core(\Delta(G))$ if and only if $G$ has no isolated vertices. The independence number of $G$, denoted by $\alpha(G)$, is the cardinality of the largest independent set in $G$. Then, $\dim (\Delta(G)) = \alpha(G)-1$.

For any independent set $S$ of $G$ we have $\Delta(G_S) = \lk_{\Delta(G)}(S)$. Therefore, $G_S$ is Cohen-Macaulay (resp. Gorenstein) if so is $G$ by Lemma $\ref{Reisner}$ (resp. Lemma $\ref{Stanley}$).

We collect here some basic facts about well-covered graphs that  will be used in the sequel.

\begin{lem}  \cite[Lemma $1$]{FHN}\label{LC} If $G$ be a well-covered graph and $S$ is a an independent set of $G$ then $G_S$ is well-covered. Moreover, $\alpha(G_S) =\alpha(G)-|S|$.
\end{lem}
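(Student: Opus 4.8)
The plan is to construct a bijection between the maximal independent sets of $G$ that contain $S$ and the maximal independent sets of $G_S$, and then to read off both assertions directly from the well-covered hypothesis on $G$. The underlying reason this works is the identity $\Delta(G_S) = \lk_{\Delta(G)}(S)$ recorded above: passing to the localization at $S$ is exactly taking the link of $S$, and links shift all facets by the fixed amount $|S|$.

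First I would set up the correspondence at the level of independent sets. If $T$ is an independent set of $G_S$, then $S \cup T$ is independent in $G$: the vertices of $T$ lie in $V(G_S) = V(G) \setminus (S \cup N_G(S))$, so none of them is adjacent to any vertex of $S$, and both $S$ and $T$ are independent on their own. Conversely, if $U$ is an independent set of $G$ with $S \subseteq U$, then every $v \in U \setminus S$ is non-adjacent to all of $S$, hence $v \notin S \cup N_G(S)$; thus $U \setminus S \subseteq V(G_S)$ and is independent in $G_S$. The maps $T \mapsto S \cup T$ and $U \mapsto U \setminus S$ are therefore mutually inverse, inclusion-preserving bijections between the independent sets of $G_S$ and the independent sets of $G$ containing $S$, and they shift cardinality by the constant $|S|$.

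Since these maps preserve inclusion, they restrict to a bijection on maximal elements: $T$ is a maximal independent set of $G_S$ if and only if $S \cup T$ is a maximal independent set of $G$, with $|S \cup T| = |T| + |S|$. Now I would invoke well-coveredness. Given any maximal independent set $T$ of $G_S$, the set $S \cup T$ is a maximal independent set of $G$, so $|S \cup T| = \alpha(G)$ because $G$ is well-covered; hence $|T| = \alpha(G) - |S|$. As $T$ was arbitrary, every maximal independent set of $G_S$ has the same cardinality $\alpha(G) - |S|$, which shows $G_S$ is well-covered. This common value is by definition $\alpha(G_S)$, giving $\alpha(G_S) = \alpha(G) - |S|$; the existence of at least one such $T$ is guaranteed by extending $S$ to a maximal independent set of $G$.

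The only delicate point—and it is a mild one—is checking that maximality is transported correctly in both directions. Concretely, the hard part will be verifying that a vertex $v$ enlarging $S \cup T$ to a larger independent set of $G$ must in fact lie in $V(G_S)$ and enlarge $T$ there; this is precisely where the definitions of $N_G(S)$ and $G_S$ are used, since $v$ non-adjacent to all of $S$ forces $v \notin S \cup N_G(S)$. Once this is in hand, both conclusions are immediate consequences of the well-covered hypothesis applied to $G$.
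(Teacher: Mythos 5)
Your proof is correct: the paper itself gives no argument for this lemma (it is quoted directly from \cite[Lemma 1]{FHN}), and your inclusion-preserving bijection $T \mapsto S \cup T$ between maximal independent sets of $G_S$ and maximal independent sets of $G$ containing $S$ is exactly the standard argument behind that citation, with the one delicate point---that a vertex extending $S \cup T$ in $G$ must avoid $S \cup N_G(S)$ and hence extend $T$ inside $G_S$---handled correctly. This is also precisely the combinatorial content of the identity $\Delta(G_S) = \lk_{\Delta(G)}(S)$ that the paper records, so nothing further is needed.
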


\begin{lem} \label{WLC} If a graph $G$ is in $W_2$ and $S$ is an independent set of $G$, then $G_S$ is in $W_2$ whenever  $|S| < \alpha(G)$.
\end{lem}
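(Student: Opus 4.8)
The plan is to verify the two defining conditions of the class $W_2$ for the graph $K := G_S$. The first condition, that $K$ is well-covered with $\alpha(K) = \alpha(G) - |S|$, is immediate from Lemma~\ref{LC}, since $G \in W_2$ is in particular well-covered; here the hypothesis $|S| < \alpha(G)$ guarantees $\alpha(K) \geq 1$, so that $K$ is a genuine (non-null) graph. The substance of the lemma is therefore the second condition: for every vertex $w \in V(K)$, the graph $K \setminus w$ is well-covered with $\alpha(K \setminus w) = \alpha(K)$.

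The key device is a lifting argument that transports maximal independent sets of $K \setminus w$ up to $G \setminus w$. Fix $w \in V(K)$; since $w \notin S \cup N_G(S)$, no vertex of $S$ is adjacent to $w$. Given any maximal independent set $T$ of $K \setminus w$, I would consider $S \cup T$. Because $T \subseteq V(K) = V(G) \setminus (S \cup N_G(S))$, no vertex of $T$ lies in $S$ or is adjacent to $S$, so $S \cup T$ is independent in $G$ with $|S \cup T| = |S| + |T|$. I claim $S \cup T$ is maximal among the independent sets of $G \setminus w$: any vertex $u \neq w$ extending it would be non-adjacent to $S$ and outside $S$, hence $u \in V(K)$, and being distinct from $w$ and from the vertices of $T$ it would extend $T$ inside $K \setminus w$, contradicting the maximality of $T$. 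Now $G \in W_2$ gives precisely that $G \setminus w$ is well-covered with $\alpha(G \setminus w) = \alpha(G)$, so $|S \cup T| = \alpha(G)$ and hence $|T| = \alpha(G) - |S| = \alpha(K)$. As $T$ was an arbitrary maximal independent set of $K\setminus w$, this proves $K \setminus w$ is well-covered.

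It remains to produce a maximum independent set of $K$ avoiding $w$, which forces $\alpha(K \setminus w) = \alpha(K)$. For this I would extend $S$ to a maximal independent set $M$ of $G \setminus w$; again invoking the $W_2$ property, $|M| = \alpha(G \setminus w) = \alpha(G)$. Since $S \subseteq M$ and $M$ is independent, $M \setminus S$ meets neither $S$ nor $N_G(S)$, so $M \setminus S$ is an independent set of $K$ of size $\alpha(G) - |S| = \alpha(K)$ not containing $w$. As $\alpha(K \setminus w) \leq \alpha(K)$ trivially, this yields the required equality.

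The one point to be careful about is conceptual rather than computational: $W_2$ is \emph{not} closed under ordinary vertex deletion, so removing $w$ from $G$ produces only a well-covered graph with the same independence number, not in general a member of $W_2$. The lifting argument is tailored to exploit exactly this weaker conclusion, using in addition only the identity $V(K) = V(G)\setminus(S\cup N_G(S))$, and this is precisely the information the $W_2$ hypothesis on $G$ supplies; I expect the temptation to overstate what survives deletion to be the main pitfall. One may alternatively reduce to the case $|S|=1$ via the localization identity $G_S = (G_{S\setminus\{v\}})_{\{v\}}$ for $v\in S$, a consequence of $S\cup N_G(S)=\bigcup_{u\in S}(\{u\}\cup N_G(u))$, but this still rests on the same lifting lemma, so the non-inductive version above is preferable.
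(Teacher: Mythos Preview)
Your argument is correct. The paper's own proof is a one-line appeal to an external result: it cites \cite[Theorem~5]{Pi} (which handles the case $|S|=1$) and then inducts on $|S|$ via the localization identity you yourself mention at the end. Your proof instead gives a self-contained direct argument for arbitrary $S$, lifting maximal independent sets of $G_S\setminus w$ to $G\setminus w$ and invoking only the defining property of $W_2$ together with Lemma~\ref{LC}. This buys independence from the cited literature and makes the mechanism transparent; the paper's route is terser but opaque without Pinter's paper in hand. One minor remark: your second step already shows that every maximal independent set of $K\setminus w$ has size exactly $\alpha(K)$, which simultaneously gives well-coveredness and $\alpha(K\setminus w)=\alpha(K)$ (and, applied to $T=\emptyset$, rules out $K\setminus w$ being empty via $|S|<\alpha(G)$). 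Your third step is therefore redundant, though harmless.
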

\begin{proof} Follows immediately by induction on $|S|$ and using \cite[Theorem $5$]{Pi}.
\end{proof}

We say that $G$ is locally Cohen-Macaulay (resp. Gorenstein) if $G_v$ is Cohen-Macaulay (resp. Gorenstein) for all vertices $v$.

\begin{lem}\cite[Lemma $2.3$]{HT} \label{ML} Let $G$ be a locally Gorenstein graph in $W_2$ and let $S$ be a nonempty independent set of $G$. Then we have $G_S$ is Gorenstein and $\Delta(G_S)$ is Eulerian with $\dim(\Delta(G_S))=\dim (\Delta(G))-|S|$.
\end{lem}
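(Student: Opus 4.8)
The plan is to reduce everything to Stanley's criterion (Lemma \ref{Stanley}) by showing that $\Delta(G_S)$ is an Euler complex which is Cohen--Macaulay and which coincides with its own core; since ``Eulerian'' is precisely the Euler-complex condition, this gives both the Gorenstein and the Eulerian conclusions at once. First I would settle the dimension formula: as $G$ is well covered, Lemma \ref{LC} gives $\alpha(G_S)=\alpha(G)-|S|$, whence $\dim\Delta(G_S)=\alpha(G_S)-1=\dim\Delta(G)-|S|$, and $\Delta(G_S)$ is pure. Next I would record the structural fact that any graph in $W_2$ has no isolated vertices: an isolated vertex $w$ lies in every maximal independent set, so $\alpha(H\setminus w)=\alpha(H)-1$, contradicting the defining property of $W_2$. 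Combined with Lemma \ref{WLC}, this shows that for $|S|<\alpha(G)$ the graph $G_S$ lies in $W_2$ and hence has no isolated vertices, so $\Delta(G_S)=\core(\Delta(G_S))$; the boundary case $|S|=\alpha(G)$, where $G_S$ is empty and $\Delta(G_S)=\{\emptyset\}$, is immediate. Thus the statement reduces to proving that $\Delta(G_S)$ is a Cohen--Macaulay Euler complex.

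We may therefore assume $1\le|S|<\alpha(G)$, so that $\alpha(G)\ge 2$. The heart of the argument is to realise $\Delta(G_S)$ as a link in a single Gorenstein graph. Fix $v\in S$ and put $S'=S\setminus\{v\}$. Since $S\in\Delta(G)$ and $v\notin S'$, we have $S'\in\lk_{\Delta(G)}(v)=\Delta(G_v)$, and the link-of-a-link identity $\lk_{\lk_\Delta(\sigma)}(\tau)=\lk_\Delta(\sigma\cup\tau)$ gives
$$\Delta(G_S)=\lk_{\Delta(G)}(S)=\lk_{\Delta(G_v)}(S').$$
Now $G_v$ is Gorenstein by the locally Gorenstein hypothesis, and since $\alpha(G)\ge2$ Lemma \ref{WLC} yields $G_v\in W_2$, so $G_v$ has no isolated vertices and $\Delta(G_v)=\core(\Delta(G_v))$; hence by Lemma \ref{Stanley} the complex $\Delta(G_v)$ is itself a Cohen--Macaulay Euler complex. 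It remains to transfer these two properties to the link at $S'$. For Cohen--Macaulayness I would apply Reisner's criterion (Lemma \ref{Reisner}) to $\Delta(G_S)$: every link $\lk_{\Delta(G_S)}(\tau)=\lk_{\Delta(G_v)}(S'\cup\tau)$ is a link in the Cohen--Macaulay complex $\Delta(G_v)$ and so has reduced homology only in top degree, which is exactly Reisner's condition for $\Delta(G_S)$. For the Euler property I would use the definition directly: for each $\tau\in\Delta(G_S)$,
$$\k\big(\lk_{\Delta(G_S)}(\tau)\big)=\k\big(\lk_{\Delta(G_v)}(S'\cup\tau)\big)=(-1)^{\dim\lk_{\Delta(G_v)}(S'\cup\tau)}=(-1)^{\dim\lk_{\Delta(G_S)}(\tau)},$$
the middle equality holding because $\Delta(G_v)$ is an Euler complex. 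Hence $\Delta(G_S)$ is a Cohen--Macaulay Euler complex.

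Combining the steps, $\core(\Delta(G_S))=\Delta(G_S)$ is Euler and Cohen--Macaulay, so $G_S$ is Gorenstein by Lemma \ref{Stanley}, while the Euler-complex property is exactly the assertion that $\Delta(G_S)$ is Eulerian; together with the dimension count this is the full claim. I expect the main obstacle to be the no-isolated-vertices point. Lemma \ref{Stanley} only controls the core, and a cone is never an Euler complex (its reduced Euler characteristic vanishes), so the transfer of the Euler property to the link genuinely requires knowing that $\Delta(G_v)$, and then $\Delta(G_S)$, equal their cores. This is precisely where the hypothesis $G\in W_2$ enters essentially, rather than mere well-coveredness, and checking it cleanly (via Lemma \ref{WLC} together with the elementary fact that $W_2$-graphs have no isolated vertices) is the step that needs the most care. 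A pleasant feature of this route is that it is a one-step argument: it uses only that $G_v$ is Gorenstein, and no induction on $|S|$ is needed.
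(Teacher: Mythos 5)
Your proof is correct. One point of context: the paper itself offers no proof of this lemma --- it is imported wholesale from \cite[Lemma $2.3$]{HT} --- so the comparison is with that source rather than with anything internal to the paper. Your argument is a clean variant that packages into a single step what the statement naturally invites one to do by induction on $|S|$: you reduce at once via $\Delta(G_S)=\lk_{\Delta(G)}(S)=\lk_{\Delta(G_v)}(S\setminus\{v\})$ and then transfer both Reisner's condition and the Euler condition from $\Delta(G_v)$ to its link, which is legitimate precisely because both conditions are quantified over all links and $\lk_{\lk_\Delta(\sigma)}(\tau)=\lk_\Delta(\sigma\cup\tau)$. The supporting steps all check out: Lemma \ref{LC} gives $\alpha(G_S)=\alpha(G)-|S|$, hence purity and the dimension formula; your observation that a $W_2$-graph has no isolated vertices is right (an isolated vertex $w$ lies in every maximal independent set, forcing $\alpha(H\setminus w)=\alpha(H)-1$), and combined with Lemma \ref{WLC} it yields $\core(\Delta(G_v))=\Delta(G_v)$, which is exactly what is needed before Lemma \ref{Stanley} can be read as ``$\Delta(G_v)$ is a Cohen--Macaulay Euler complex'' rather than merely a statement about its core --- you correctly identify this as the one place where $W_2$, and not mere well-coveredness, is essential. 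Two small refinements: for $\Delta(G_S)$ itself the $W_2$ detour is dispensable, since an Euler complex has $\k=(-1)^{\dim}\ne 0$ and so is not a cone, which already forces $\core(\Delta(G_S))=\Delta(G_S)$ once the Euler property is established (though you do still need the $W_2$ argument for $G_v$); and in the boundary case $|S|=\alpha(G)$ it is worth one extra clause making the convention explicit that $G_S$ is then the null graph, whose Stanley--Reisner ring is $k$ (hence Gorenstein) and whose complex $\{\emptyset\}$ is the Euler complex of dimension $-1$, matching $\dim(\Delta(G))-|S|=-1$.
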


A vertex $v$ of a well-covered graph $G$ is called {\it extendable} if $G\setminus v$ is well-covered and $\alpha(G\setminus v) = \alpha(G)$. Thus, $G$ is in $W_2$ if and only if every its vertex is extendable.

\begin{lem}\label{E1}Let $G$ be a graph with at least two vertices. If $\Delta(G)$ is semi-Eulerian, then $G$ is in $W_2$. \end{lem}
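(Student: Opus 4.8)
The plan is to translate membership in $W_2$ into a purely combinatorial statement about the facets of $\Delta := \Delta(G)$, and then to extract a contradiction from the Euler condition on a suitable vertex link. First, since a semi-Euler complex is pure, $G$ is well-covered; write $d = \dim\Delta = \alpha(G)-1$. Recall that $G$ lies in $W_2$ exactly when every vertex $v$ is extendable, that is, when every maximal independent set of $G\setminus v$ has $\alpha(G)$ elements. In the language of the complex, $\Delta\setminus v = \Delta(G\setminus v)$, so the extendability of $v$ is precisely the assertion that $\Delta\setminus v$ is pure of dimension $d$.

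I would argue by contradiction: suppose some vertex $v$ is not extendable, and let $H$ be a maximal independent set of $G\setminus v$ with $|H| < \alpha(G)$. The first key step is to pin down the structure of $H$. Since $H$ is maximal in $G\setminus v$ but, being too small, is not maximal in $G$, the only vertex that can extend $H$ inside $G$ is $v$ itself; hence $H\cup\{v\}$ is independent and, again by the maximality of $H$ in $G\setminus v$, it is in fact a maximal independent set of $G$. Well-coveredness then forces $|H\cup\{v\}| = \alpha(G)$, so $|H| = \alpha(G)-1$ and $\dim H = d-1$. The same maximality shows that $v$ is the unique vertex of $G$ outside $H$ that is non-adjacent to $H$, i.e. $G_H = \{v\}$ is a single vertex. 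Equivalently, $\lk_\Delta(H) = \Delta(G_H) = \{\emptyset,\{v\}\}$ is a single point.

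The contradiction now comes from the semi-Euler hypothesis. If $H = \emptyset$ (which can only occur when $\alpha(G)=1$), then $G_H = G$ would be the single vertex $v$, contradicting that $G$ has at least two vertices; so $H \ne \emptyset$ and we may fix a vertex $u \in H$. By hypothesis $\lk_\Delta(u)$ is an Euler complex, and by transitivity of links the face $H\setminus\{u\}$ of $\lk_\Delta(u)$ satisfies $\lk_{\lk_\Delta(u)}(H\setminus\{u\}) = \lk_\Delta(H) = \{\emptyset,\{v\}\}$. Applying the Euler condition to this face gives $\k(\{\emptyset,\{v\}\}) = (-1)^{\dim\{\emptyset,\{v\}\}}$, that is $0 = (-1)^0 = 1$, which is absurd. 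Hence no such $v$ exists and $G\in W_2$.

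The only genuinely delicate step is the structural analysis of a non-extendable vertex: one has to see that the failure of extendability is always witnessed by a ``unique-extension'' independent set $H$ whose localization $G_H$ collapses to the single vertex $v$, since this is exactly what turns the relevant link into a single point, whose reduced Euler characteristic is $0$ rather than $\pm 1$. Once that is in hand, the link-transitivity identity together with the Euler condition finishes the argument, and the low-dimensional boundary case is absorbed by the standing hypotheses that $G$ has no isolated vertices and at least two vertices.
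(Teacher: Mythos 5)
Your proof is correct, and its engine is the same as the paper's: a non-extendable vertex $v$ yields an independent set $S$ such that $v$ is isolated in $G_S$, so the link $\lk_{\Delta(G)}(S)=\Delta(G_S)$ is a cone with reduced Euler characteristic $0$, contradicting the value $(-1)^{\dim}$ forced by the semi-Eulerian hypothesis. The difference lies in how that set is produced. The paper invokes an external result of Finbow--Hartnell--Nowakowski (their Corollary 2.1) to obtain a nonempty independent set $S$ with $v$ isolated in $G_S$, whereas you construct the witness by hand as a deficient maximal independent set $H$ of $G\setminus v$ and prove the sharper statement $G_H=\{v\}$, so that $\lk_{\Delta(G)}(H)=\{\emptyset,\{v\}\}$ is literally a single point with $\k=0\ne(-1)^0$. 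Your structural step --- that $H\cup\{v\}$ must be independent and maximal in $G$, hence $|H|=\alpha(G)-1$ and $v$ is the unique vertex outside $H\cup N_G(H)$ --- is carried out correctly, and your handling of the degenerate case $H=\emptyset$ via the hypothesis $|V(G)|\geqslant 2$ is exactly where that hypothesis is needed. So your argument is self-contained and slightly more informative, at the cost of a short extra lemma-within-the-proof; the paper's is shorter on the page but imports a graph-theoretic citation. (The paper also frames its proof as an induction on $\alpha(G)$, but the inductive hypothesis is never used in the inductive step, so your non-inductive organization loses nothing.)
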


\begin{proof} We prove the lemma by induction on $\alpha(G)$. If $\alpha(G)=1$, then $G$ is a complete graph, and then $G$ is in $W_2$ as $|V(G)| \geqslant 2$.

Assume that $\alpha(G)\geqslant 2$. Since $\Delta(G)$ is semi-Eulerian, $G$ is well-covered.  Assume on contrary that $G$ is not in $W_2$, so that $G$ has a nonextendable vertex, say $v$. By \cite[Corollary $2.1$]{FHN} there would be an nonempty independent set $S$ of $G$ such that $v$ is an isolated vertex of $G_S$. In particular, $\k(\Delta(G_S)) = 0$.

On the other hand, by Lemma $\ref{LC}$ we have $\alpha(G_S) = \alpha(G) - |S| < \alpha(G)$. Thus,  $\Delta(G_S)=\lk_{\Delta(G)}(S)$ is Eulerian because $\Delta(G)$ is semi-Eulerian. It follows that $$\k(\Delta(G_S)) = (-1)^{\dim \Delta(G_S)} \ne 0,$$ a contradiction. Hence, every vertex of $G$ is extendable, and hence $G$ is in $W_2$.
\end{proof}

\begin{rem}\label{PG} A planar graph is a pseudo-planar graph.
\end{rem}

In order to see this we recall a criterion on deciding whether a given graph is planar.  A {\it minor} of a graph $G$ is any graph obtainable from $G$ by means of a sequence of vertex and edge deletions and edge contractions. Alternatively, consider a partition $(V_0,V_1, \ldots,V_m)$ of $V(G)$ such that $G[V_i]$ is connected, $1 \leqslant i \leqslant m$, and let $H$ be the graph obtained from $G$ by deleting $V_0$ and shrinking each induced subgraph $G[V_i]$, $1 \leqslant i \leqslant m$, to a single vertex. Then any spanning subgraph of $H$ is a minor of $G$.  A minor which is isomorphic to $K_5$ or $K_{3,3}$ is called a Kuratowski minor. Then, Wagner's theorem (see \cite[Theorem $10.3$]{BM}) says that: {\it $G$ is planar if and only if $G$ has no Kuratowski minor}. 

If $G$ be a planar graph and $S\in \Delta(G)$, then $G_S$ is an induced subgraph of $G$, and hence it is planar. By \cite[Corollary $10.22$]{BM} we deduce that $\delta^*(G_S) \leqslant 5$. Moreover, if $G_S \cong 2K_2 * 2 K_2$, then we can see that $G_S$ would have a minor $K_{3,3}$, a contradiction. It follows that $G$ is pseudo-planar.

\section{Euler characteristics of semi-Eulerian independence complexes}

In this section we give a formula for computing $\k(\Delta(G))$ when $\Delta(G)$ is semi-Eulerian.

\begin{lem} \label{SumEulerCh} Let $\Delta$ be a simplicial complex. If $\Delta$ is not a void complex, then
$$\sum_{F\in\Delta}\k(\lk_{\Delta}(F)) = -1.$$
\end{lem}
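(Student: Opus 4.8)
The plan is to expand everything in terms of the combinatorial definition of the reduced Euler characteristic and then interchange the order of summation, reindexing by the union of the two faces. Recall that for any face $F\in\Delta$ we have
$$\k(\lk_{\Delta} F) = \sum_{H\in\lk_{\Delta} F}(-1)^{|H|-1},$$
and that $H\in\lk_{\Delta}F$ means precisely that $H\cap F=\emptyset$ and $H\cup F\in\Delta$. So the first step is to write the left-hand side as the double sum
$$\sum_{F\in\Delta}\k(\lk_{\Delta} F)=\sum_{F\in\Delta}\ \sum_{\substack{H\cap F=\emptyset\\ H\cup F\in\Delta}}(-1)^{|H|-1},$$
which I would interpret as a single sum over all ordered pairs $(F,H)$ of disjoint faces whose union lies in $\Delta$ (note that $F\cup H\in\Delta$ automatically forces $F\in\Delta$ and $H\in\Delta$ by closure under subsets, so no constraint is lost).

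The crucial observation is that the summand $(-1)^{|H|-1}$ depends only on $H$, not on how the union $F\cup H$ is split. So next I would reindex the pairs by their union $G:=F\cup H\in\Delta$: for a fixed $G$, the pairs $(F,H)$ with $F\sqcup H=G$ are in bijection with the subsets $H\subseteq G$ (setting $F=G\setminus H$). This converts the double sum into
$$\sum_{G\in\Delta}\ \sum_{H\subseteq G}(-1)^{|H|-1}.$$
The inner sum is then evaluated by the binomial theorem: $\sum_{H\subseteq G}(-1)^{|H|-1}=-\sum_{j=0}^{|G|}\binom{|G|}{j}(-1)^{j}=-(1-1)^{|G|}$, which equals $0$ when $|G|\geqslant 1$ and equals $-1$ when $G=\emptyset$ (using the convention $0^{0}=1$). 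Since $\Delta$ is not void, the empty face $\emptyset$ belongs to $\Delta$ and is the unique face contributing, so the total collapses to $-1$, as claimed.

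I do not expect a serious obstacle here; the argument is a clean Fubini-type swap followed by a binomial collapse. The only point requiring care is bookkeeping of the empty face and the $(-1)$ exponent shifts: one must verify that the reduced convention (where $\emptyset$ counts with weight $(-1)^{-1}=-1$) is used consistently, that $\emptyset\in\lk_{\Delta}F$ for every $F$, and that ``$\Delta$ not void'' is exactly the hypothesis guaranteeing $\emptyset\in\Delta$ so that precisely one term survives. Everything else is routine.
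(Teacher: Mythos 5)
Your argument is correct, but it is genuinely different from the one in the paper. You expand the left-hand side as a sum of $(-1)^{|H|-1}$ over ordered pairs $(F,H)$ of disjoint faces with $F\cup H\in\Delta$, reindex by the union $G=F\cup H$, and collapse the inner sum $\sum_{H\subseteq G}(-1)^{|H|-1}=-(1-1)^{|G|}$ so that only $G=\emptyset$ survives; the non-voidness hypothesis is used exactly where it should be, to guarantee $\emptyset\in\Delta$. The bijection $(F,H)\leftrightarrow(G,H)$ with $F=G\setminus H$ is sound, and the observation that $F\cup H\in\Delta$ already forces $F,H\in\Delta$ correctly shows no constraints are lost in the Fubini swap. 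The paper instead proceeds by a double induction, on $\dim\Delta$ and on the number of top-dimensional facets, removing a facet $F_s$ at each step and using that $\k(\seq{F_s})=0$ for a nonempty simplex --- which is in essence the same binomial identity $\sum_{H\subseteq F_s}(-1)^{|H|-1}=0$ that you invoke, but applied one facet at a time rather than all at once. Your closed-form double count is shorter, avoids the bookkeeping of distinguishing facets by dimension, and makes transparent that the answer is $-1$ precisely because the empty face is the unique face whose ``simplex'' has nonzero reduced Euler characteristic; the inductive proof buys nothing extra here beyond matching the style of the surrounding arguments.
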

\begin{proof} Let $d := \dim(\Delta)$. We prove the lemma by induction on $d$. If $d = -1$, then $\Delta = \{\emptyset\}$, and  the lemma holds for this case.

If $d = 0$, then $\Delta$ consists of isolated vertices, say $x_1,\ldots,x_n$, where $n=|V(G)|$. We have
$$\sum_{F\in\Delta}\k(\lk_{\Delta}(F)) = \k(\lk_{\Delta}(\{\emptyset\})) + \sum_{i=1}^n \k(\lk_{\Delta}(x_i)) = -1+n-n = -1,$$
and thus the lemma holds.

Assume that $d \geqslant 1$. Let $F_1,\ldots,F_t$ be the facets of $\Delta$. We may assume that
$\dim(F_i) = d$ for all $i=1,\ldots,s$; and $\dim(F_i) < d$ for all $i=s+1,\ldots,t$; with $1 \leqslant s \leqslant t$. We now proceed to prove by induction on $s$. If $s = 1$, let $\Lambda = \Delta \setminus \{F_1\}$. Because $\dim \Lambda = \dim \Delta - 1 = d-1$, by the induction hypothesis we have $$\sum_{F\in\Lambda}\k(\lk_{\Delta}(F)) = -1,$$ 
so that
\begin{align*}\sum_{F\in\Delta}\k(\lk_{\Delta}(F)) &= \sum_{F\in\Lambda}\k(\lk_{\Delta}(F)) + \sum_{F\subseteq F_1} (-1)^{|F_1| - |F|-1}=-1+ (-1)^{|F_1|} \sum_{F\subseteq F_1} (-1)^{|F|-1}\\
&=-1+(-1)^{|F_1|}\k(\left<F_1\right>),
\end{align*}
where $\left<F_1\right>$ is a simplex on the vertex set $F_1$. Note that $F_1 \ne \emptyset$, so $\k(\left<F_1\right>)=0$, and so $\sum_{F\in\Delta}\k(\lk_{\Delta}(F)=-1$.

Assume that $s \geqslant 2$. By the same way as the previous case , let $\Lambda := \Delta\setminus \{F_s\}$. Since $\dim \Lambda = \dim \Delta$ and $\Lambda$ has one facet less than $\Delta$, by the induction hypothesis on $s$ we have $\sum_{F\in\Delta}\k(\lk_{\Lambda}(F))=-1$. Therefore,
$$\sum_{F\in\Delta}\k(\lk_{\Delta}(F)) = \sum_{F\in\Lambda}\k(\lk_{\Delta}(F)) +(-1)^{|F_s|}\k(\left<F_s\right>) = -1.$$
\end{proof}

\begin{prop} \label{EulerCh} Let $G$ be a graph such that $\Delta(G)$ is semi-Eulerian. Let $v$ be a vertex of $G$ and $d:= \dim(\Delta(G))$. Then,
$$\k(\Delta(G)) = (-1)^d(1+\k(\Delta(G[N(v)]))).$$
\end{prop}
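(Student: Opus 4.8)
The plan is to strip the vertex $v$ off $\Delta(G)$ and account for the faces that disappear, using Lemma~\ref{SumEulerCh} as the engine. First I would apply Lemma~\ref{SumEulerCh} both to $\Delta:=\Delta(G)$ and to the link $\lk_{\Delta}(v)=\Delta(G_v)$. Writing each face through $v$ as $\{v\}\cup H$ with $H\in\lk_{\Delta}(v)$ and using $\lk_{\Delta}(\{v\}\cup H)=\lk_{\lk_{\Delta}(v)}(H)$, the faces containing $v$ contribute $\sum_{H\in\lk_{\Delta}(v)}\widetilde{\chi}(\lk_{\lk_{\Delta}(v)}(H))=-1$; subtracting this from the identity for $\Delta$ leaves
\[
\sum_{F\in\Delta,\ v\notin F}\widetilde{\chi}(\lk_{\Delta}(F))=0 .
\]
By Lemma~\ref{E1} the graph $G$ lies in $W_2$, so $\Delta$ is pure and $\dim\lk_{\Delta}(F)=d-|F|$ by Lemma~\ref{LC}; as $\Delta$ is semi-Eulerian, $\lk_{\Delta}(F)$ is Eulerian for every nonempty $F$, whence $\widetilde{\chi}(\lk_{\Delta}(F))=(-1)^{d-|F|}$. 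Separating the term $F=\emptyset$ (which equals $\widetilde{\chi}(\Delta)$) and summing the alternating signs over the nonempty faces of $\Delta\setminus v=\Delta(G\setminus v)$ gives, after a short rearrangement,
\[
\widetilde{\chi}(\Delta(G))=(-1)^d\bigl(1+\widetilde{\chi}(\Delta(G\setminus v))\bigr).
\]

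This reduces the proposition to the identity $\widetilde{\chi}(\Delta(G\setminus v))=\widetilde{\chi}(\Delta(G[N(v)]))$, i.e. to showing that deleting the vertices of $U:=V(G_v)$ from $G\setminus v$ does not change the reduced Euler characteristic. To attack it I would sort the faces of $\Delta(G\setminus v)$ by their trace $A:=F\cap N(v)$, which runs over $\Delta(G[N(v)])$; for fixed $A$ the remaining vertices form an independent set of $G_v$ avoiding $W_A:=N_G(A)\cap U$, so
\[
\widetilde{\chi}(\Delta(G\setminus v))=\sum_{A\in\Delta(G[N(v)])}(-1)^{|A|}\,\widetilde{\chi}\bigl(K\setminus W_A\bigr),\qquad K:=\Delta(G_v),
\]
where $K$ is Eulerian of dimension $d-1$.

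The auxiliary lemma I would isolate is a restriction identity for Eulerian complexes: for any vertex set $W$,
\[
\widetilde{\chi}(K\setminus W)=(-1)^{\dim K+1}\,\widetilde{\chi}(K|_{W}).
\]
I would prove it by Möbius inversion, expanding the indicator of $\sigma\cap W=\emptyset$ as $\sum_{S\subseteq\sigma\cap W}(-1)^{|S|}$ and exchanging summations; everything reduces to $\sum_{\sigma\supseteq S}(-1)^{|\sigma|-1}=(-1)^{|S|}\widetilde{\chi}(\lk_K(S))$, which by the defining relation of an Eulerian complex equals $(-1)^{\dim K}$ when $S\in K$ and vanishes otherwise. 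Substituting turns $\widetilde{\chi}(\Delta(G\setminus v))$ into $(-1)^d\sum_A(-1)^{|A|}\widetilde{\chi}(K|_{W_A})$, and a second inclusion--exclusion over the faces $\tau$ of $K$ collapses the inner sums.

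This collapse, and hence the whole argument, hinges on knowing that $\widetilde{\chi}(\Delta(G[N(v)\setminus N_G(\tau)]))=0$ for every nonempty face $\tau$ of $K=\Delta(G_v)$. Since $N(v)\setminus N_G(\tau)$ is exactly the set of neighbors of $v$ inside the localization $G_\tau$, and since $\Delta(G_\tau)=\lk_{\Delta}(\tau)$ is Eulerian for $\tau\neq\emptyset$, this vanishing is precisely the present proposition applied to the strictly smaller graph $G_\tau$: its independence complex has reduced Euler characteristic $(-1)^{\dim\Delta(G_\tau)}$, which by the proposition forces the neighborhood term to be zero. I therefore expect to run the whole proof as an induction on $|V(G)|$, feeding the inductive hypothesis only through the localizations $G_\tau$. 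The main obstacle is exactly this bookkeeping: the clean reduction in the first paragraph is phrased through the vertex-deletion $G\setminus v$, which does \emph{not} preserve the semi-Eulerian hypothesis, whereas the induction can only be applied to the localizations $G_\tau$, which do; reconciling the two, together with getting the signs in the restriction identity right, is where the real work lies.
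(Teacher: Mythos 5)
Your proposal is correct, and it reaches the formula by a genuinely different route. The paper partitions $\Delta(G)$ into $\st_{\Delta}(v)$ and the faces meeting $N(v)$, groups the latter by their trace $S=F\cap N(v)$, and applies M\"obius inversion on the face poset of $\Delta(G[N(v)])\setminus\{\emptyset\}$ to turn the trace sums $\tau(S)$ into link Euler characteristics, finishing with Lemma \ref{SumEulerCh}; no genuine induction is needed in that step. You instead apply Lemma \ref{SumEulerCh} to both $\Delta$ and $\lk_{\Delta}(v)$ to obtain the clean intermediate identity $\k(\Delta(G))=(-1)^d\bigl(1+\k(\Delta(G\setminus v))\bigr)$, and then separately prove $\k(\Delta(G\setminus v))=\k(\Delta(G[N(v)]))$ by a double inclusion--exclusion through $K=\Delta(G_v)$, a restriction identity $\k(K\setminus W)=(-1)^{\dim K+1}\k(K|_W)$ for Eulerian complexes, and an induction on $|V(G)|$ fed through the localizations $G_T$ (which is legitimate: $\Delta(G_T)=\lk_{\Delta}(T)$ is Eulerian, hence semi-Eulerian, and $v$ survives in $G_T$, so the inductive hypothesis does force $\k(\Delta(G[N(v)\setminus N_G(T)]))=0$; I checked that the resulting collapse gives $-\k(\Lambda)\k(K)=(-1)^d\k(\Lambda)$ and hence the desired equality, with the worry about $G\setminus v$ not being semi-Eulerian never actually arising since that graph only enters through its raw Euler characteristic). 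What your route buys is the extra identity through $G\setminus v$ and the general restriction formula for Eulerian complexes, both of independent interest; what it costs is a heavier second half in which the proposition must be proved by a genuine induction, whereas the paper's single M\"obius inversion is shorter and self-contained.
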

\begin{proof} We prove by induction on $d$. If $d = 0$, then $\alpha(G)=1$. In this case, $G$ is a complete graph, so $G=K_n$ where $n=|V(G)|$. Let $v$ be a vertex of $G$. Then, $G[N(v)]=K_{n-1}$. Hence,
$$\k(\Delta(G)) = -1+n \text{ and } \k(G[N(v)])=-1+(n-1)=-2+n,$$ 
and hence the proposition holds.

Assume that $d\geqslant 1$ so that $\alpha(G)\geqslant 2$. By Lemma $\ref{E1}$ we have $G$ is in $W_2$, in particular $G$ is well-covered. Let $\Delta := \Delta(G)$ and $A := N(v)$. Let
$$\Gamma := \{F\in \Delta \mid F\cap A \ne\emptyset\},$$
so that $\Delta$ can be partitioned into $\Delta= \st_{\Delta}(v) \cup \Gamma$. Note that $\k(\st_{\Delta}(v)) = 0$ because $\st_{\Delta}(v)$ is a cone over $v$. Thus,
\begin{align*}
\k(\Delta) = \sum_{F\in\Delta} (-1)^{|F|-1} &= \sum_{F\in\st_{\Delta}(v)} (-1)^{|F|-1}+\sum_{F\in\Gamma} (-1)^{|F|-1}\\
&= \k(\st_{\Delta}(v)) + \sum_{F\in\Gamma} (-1)^{|F|-1} = \sum_{F\in\Gamma} (-1)^{|F|-1}.
\end{align*}
Let $\Lambda := \Delta(G[N(v)])$ and let $\Omega := \Lambda \setminus \{\emptyset\}$. For each $S\in\Omega$, we define
$$g(S):=\sum_{F\in\Gamma, S \subseteq F} (-1)^{|F|-1}, \ \text{ and } \tau(S):=\sum_{F\in\Gamma, F \cap A = S} (-1)^{|F|-1}.$$
Then, $$\k(\Delta) = \sum_{U\in\Omega}\tau(S), \ \text{ and } \ g(S) = \sum_{F\in \Omega, S\subseteq F} \tau(F).$$

For every $S\in \Omega$, as $S$ is a nonempty face of $\Delta$ and $\Delta$ is semi-Eulerian, we have $\Delta(G_S)$ is Eulerian. Since $G$ is well-covered, $\alpha(G_S)=\alpha(G)-|S|$ by Lemma $\ref{LC}$, and so $\dim (\Delta(G_S)) =\alpha(G_S)-1 = d -|S|$. Hence, $\k(\Delta(G_S)) = (-1)^{d -|S|}$, and hence
\begin{align*}g(S) &= \sum_{F\in\Gamma, S \subseteq F} (-1)^{|F|-1} =\sum_{F\in\Delta, S \subseteq F} (-1)^{|F|-1}= \sum_{F\in\Delta(G_S)} (-1)^{|F|+|S|-1} \\
&= (-1)^{|S|} \sum_{F\in\Delta(G_S)} (-1)^{|F|-1} = (-1)^{|S|} \k(G_S) = (-1)^{|S|} (-1)^{d-|S|} =  (-1)^d.
\end{align*}

We now consider $\Omega$ as a poset with the partial order $\leqslant$ being inclusion. Then, $g(S)$ can be written as
$$g(S) = \sum_{F\in \Omega, F\geqslant S} \tau(F).$$

Let $\mu$ be the Mobius function of the poset $\Omega$. Then by Mobius inversion formula (see \cite[Proposition $3.7.2$]{S1}) we have
$$\tau(S) = \sum_{F\in \Omega, F\geqslant S} \mu(S,F)g(F) = \sum_{F\in \Omega, F\geqslant S} \mu(S,F) (-1)^d = (-1)^d \sum_{F\in \Omega, F\geqslant S} \mu(S,F).$$
Observe that if $S\leqslant F$ in $\Omega$, then every $T$ such that $S \subseteq T \subseteq F$ we have $T\in \Omega$ and $S\leqslant T\leqslant F$. Hence, $\mu(S,F) = (-1)^{|F| - |S|}$, and hence
\begin{align*}\tau(S) &= (-1)^d \sum_{F\in \Omega, F\geqslant S} (-1)^{|F|-|S|} = (-1)^d \sum_{F\in \Omega, F\geqslant S} (-1)^{|F|-|S|}\\
&= (-1)^d \sum_{F\in \Lambda, S \subseteq F} (-1)^{|F|-|S|}= -(-1)^d \sum_{F\in \lk_{\Lambda}(S)} (-1)^{|F|-1} = -(-1)^d\k(\lk_{\Lambda}(S)).
\end{align*}
Therefore,
\begin{align*} \k(\Delta) &= \sum_{S\in\Omega}\tau(S) = \sum_{S\in\Lambda, S\ne\emptyset}\tau(S) =-(-1)^d \sum_{S\in\Lambda,
S\ne\emptyset}\k(\lk_{\Lambda}(S))\\
&=(-1)^d (\lk_{\Lambda}(\emptyset) - \sum_{S\in\Lambda}\k(\lk_{\Lambda}(S))) = (-1)^d (\k(\Lambda)) -\sum_{S\in\Lambda}\k(\lk_{\Lambda}(S))).
\end{align*}
On the other hand, by Proposition $\ref{SumEulerCh}$ we have $\sum_{S\in\Lambda}\k(\lk_{\Lambda}(S)) = -1$, and thus
$$\k(\Delta) = (-1)^d (1+\k(\Delta(G[N(v)]))),$$
as required.
\end{proof}

As a consequence we have.

\begin{cor}\label{E2} Let $G$ be a graph such that $\Delta(G)$ is semi-Eulerian. Then, the following conditions are equivalent:
\begin{enumerate}
\item $\Delta(G)$ is Eulerian;
\item $\k(\Delta(G[N(v)])) = 0$ for every vertex $v$ of $G$;
\item $\k(\Delta(G[N(v)])) = 0$ for some vertex $v$ of $G$.
\end{enumerate}
\end{cor}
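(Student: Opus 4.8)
The plan is to reduce the Eulerian property to a single numerical identity and then to feed Proposition~\ref{EulerCh} into it. First I would record that, since $\Delta := \Delta(G)$ is semi-Eulerian, Lemma~\ref{E1} forces $G$ to lie in $W_2$ and hence to be well-covered, so $\Delta$ is pure; write $d := \dim(\Delta)$. By definition $\Delta$ is Eulerian exactly when $\k(\lk_{\Delta} F) = (-1)^{\dim \lk_{\Delta} F}$ for \emph{every} face $F \in \Delta$, including $F = \emptyset$.

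The key reduction is that the semi-Eulerian hypothesis already delivers this identity for all nonempty $F$. Indeed, given a nonempty face $F$, choose a vertex $x \in F$ and set $G' := F \setminus \{x\}$; then $\lk_{\Delta} F = \lk_{\lk_{\Delta}(x)}(G')$, and because $\lk_{\Delta}(x)$ is Eulerian (this is precisely the semi-Eulerian assumption), the required equality holds for $F$. Since $\lk_{\Delta}(\emptyset) = \Delta$ has dimension $d$, I conclude that $\Delta$ is Eulerian if and only if the lone remaining case $F = \emptyset$ holds, i.e.\ if and only if $\k(\Delta) = (-1)^d$. This is the step I expect to be the main obstacle, as it requires the link-of-link identity $\lk_{\lk_{\Delta}(x)}(G') = \lk_{\Delta}(G' \cup \{x\})$ together with the observation that every nonempty face contains a vertex; one must also keep the purity in mind so that the notions of Euler and semi-Euler complex apply.

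With this reduction in hand, fix any vertex $v$. Proposition~\ref{EulerCh} gives $\k(\Delta) = (-1)^d\bigl(1 + \k(\Delta(G[N(v)]))\bigr)$, so $\k(\Delta) = (-1)^d$ holds if and only if $\k(\Delta(G[N(v)])) = 0$. Combining this with the reduction above proves that condition $(1)$ is equivalent to each of $(2)$ and $(3)$ at once.

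Finally, to see that $(2)$ and $(3)$ are genuinely equivalent, and not merely both equivalent to $(1)$, I would note that Proposition~\ref{EulerCh} rearranges to $\k(\Delta(G[N(v)])) = (-1)^d \k(\Delta) - 1$, whose right-hand side does not involve $v$. Hence the quantity $\k(\Delta(G[N(v)]))$ takes the same value at every vertex, so ``for some $v$'' and ``for every $v$'' describe the same situation, completing the chain of equivalences.
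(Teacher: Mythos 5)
Your proposal is correct and follows essentially the same route as the paper: both hinge on Proposition \ref{EulerCh} to convert $\k(\Delta)=(-1)^d$ into the vanishing of $\k(\Delta(G[N(v)]))$, with the semi-Eulerian hypothesis supplying the Eulerian identity for all nonempty faces. Your explicit justification of that reduction via the link-of-link identity $\lk_{\lk_{\Delta}(x)}(F\setminus\{x\}) = \lk_{\Delta}(F)$ merely spells out a step the paper leaves implicit in the phrase ``together with the semi-Eulerian property of $\Delta$.''
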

\begin{proof} Let $\Delta = \Delta(G)$ and $d:=\dim(\Delta(G))$. $(1)\Longrightarrow (2)$:  Since $\Delta$ is Eulerian, $\k(\Delta)=(-1)^d$. By Proposition $\ref{EulerCh}$ we have
$$\k(\Delta) = (-1)^d = (-1)^d(1+\k(\Delta(G[N(v)])),$$
so that $\k(\Delta(G[N(v)])) = 0$.

$(2)\Longrightarrow (3)$: Obviously. $(3) \Longrightarrow (1)$: Since $\Delta$ is semi-Eulerian, by Proposition $\ref{EulerCh}$ we have
$$\k(\Delta) = (-1)^d (1+\k(\Delta(G[N(v)])) = (-1)^d.$$
Together with the semi-Eulerian property of $\Delta$, it follows that $\Delta$ is Eulerian.
\end{proof}

\section{Gorenstein Pseudo-Planar Graphs}
	
In this section, we will prove that if $G$ is a pseudo-planar graph such that $\Delta(G)$ is Eulerian then $G$ is Gorenstein.  We start when $\alpha(G)$ is small.

\begin{lem} \label{A1} Let $G$ be a pseudo-planar graph with $\alpha(G) \leqslant 2$. If $\Delta(G)$ is Eulerian then $G$ is Gorenstein.
\end{lem}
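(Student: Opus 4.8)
The plan is to prove the statement by analyzing the possible structures of $G$ according to $\alpha(G)$ and using the combinatorial criteria from Lemmas \ref{Reisner} and \ref{Stanley}. Since $\alpha(G) \leqslant 2$, the dimension $d = \dim(\Delta(G)) = \alpha(G) - 1$ is either $0$ or $1$, and this severely restricts the topology. First I would dispose of the case $\alpha(G) = 1$: here $G$ is a complete graph $K_n$, and $\Delta(G)$ consists of $n$ isolated vertices. Checking the Euler/Gorenstein conditions directly is then routine, and in fact $\Delta(G)$ Eulerian forces $n = 2$, giving a single edge, which is Gorenstein.

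The substantive case is $\alpha(G) = 2$, where $d = 1$ and $\Delta(G)$ is a graph (a $1$-dimensional complex). I would first invoke Corollary \ref{E2} together with Lemma \ref{E1}: since $\Delta(G)$ is Eulerian it is in particular semi-Eulerian, so $G$ lies in $W_2$ and is well-covered, and moreover $\k(\Delta(G[N(v)])) = 0$ for every vertex $v$. Because $\alpha(G) = 2$, each induced neighborhood subgraph $G[N(v)]$ has independence number at most $1$, i.e. $\Delta(G[N(v)])$ is a set of isolated vertices; the vanishing of its reduced Euler characteristic then pins down the number of such vertices. The goal is to translate "$\Delta(G)$ Eulerian" into a concrete description of $G$ (via these local neighborhood counts and the Euler-complex condition on links of vertices), and then verify the Reisner condition (i) of the Gorenstein characterization. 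For a $1$-dimensional complex, the only homology that must vanish below top dimension is $\widetilde H_0$ of the relevant links, which amounts to a connectivity statement: the links $\lk_{\Delta}(F)$ must be connected (when $F = \emptyset$, $\Delta(G)$ itself must be connected) and $\Delta(G)$ must be an Euler complex.

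The key technical point is to show that the Eulerian hypothesis, in this low-dimensional regime, already forces $\Delta(G)$ to be connected, so that $\widetilde H_0(\Delta(G); k) = \0$ and the only surviving homology sits in top degree $d = 1$; combined with the Euler-complex condition this yields Cohen-Macaulayness via Lemma \ref{Reisner}, and then Gorensteinness via Lemma \ref{Stanley}. I would argue connectivity by supposing $\Delta(G)$ decomposes into components and deriving a contradiction with the Euler-characteristic value $\k(\Delta(G)) = (-1)^d = -1$ dictated by the Eulerian property, using that each component of a pure $1$-complex that is well-covered contributes in a controlled way. It is here that the pseudo-planar hypotheses $\delta^*(G_S) \leqslant 5$ and $G_S \not\cong 2K_2 * 2K_2$ should enter, ruling out the pathological neighborhood configurations that would otherwise produce extra $0$-dimensional homology or violate the link conditions.

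The step I expect to be the main obstacle is precisely this connectivity/homology-vanishing argument: establishing that the Eulerian condition, which is a purely numerical (Euler-characteristic) hypothesis on all links, forces the genuinely topological vanishing of lower homology in condition (i). The numerical Euler-complex condition does not by itself imply Cohen-Macaulayness in general, so the pseudo-planarity constraints on $\delta^*(G_S)$ must be leveraged to exclude the counterexamples; carefully matching the local neighborhood data extracted from Corollary \ref{E2} against the degree bound $\delta^*(G_S) \leqslant 5$ and the excluded join $2K_2 * 2K_2$ is the delicate part of the proof, even though the small value of $\alpha(G)$ keeps the bookkeeping manageable.
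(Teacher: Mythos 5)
Your reduction matches the paper up to the decisive point: the case $\alpha(G)=1$ is handled the same way, and for $\alpha(G)=2$ the paper likewise notes that every $G_v$ is Gorenstein with $\alpha(G_v)=1$ (hence a single edge, by the first case) and that everything comes down to showing $\h_0(\Delta(G);k)=0$, i.e.\ that $\Delta(G)$ is connected. But the connectivity step is exactly where your proposal has a genuine gap, and the mechanism you lead with --- ``deriving a contradiction with the Euler-characteristic value $\k(\Delta(G))=-1$'' --- cannot work. If $\Delta(G)$ is disconnected then $G=G_1*G_2$ is a join; since every vertex-localization $G_v$ is an edge, $G$ is $(n-3)$-regular, so $\Delta(G)$ is a disjoint union of cycles and automatically satisfies $\k(\Delta(G))=-1$. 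The graph $2K_2*2K_2$ is precisely such an example: its independence complex is Eulerian yet disconnected (two disjoint $4$-cycles), so no Euler-characteristic bookkeeping, however refined, can detect the disconnection. You do remark at the end that pseudo-planarity ``must be leveraged,'' but you never say how, and that is the entire content of the step.

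The paper's actual argument is a short structural count using both pseudo-planarity conditions. Assuming $\Delta(G)$ disconnected, write $G=G_1*G_2$. For $v\in V(G_1)$ one checks that $N_{G_1}(v)\ne\emptyset$ (otherwise some localization fails to be an edge), so $G[N(v)]$ has no isolated vertices and $\delta^*(v)=|N(v)|=n-3$; the condition $\delta^*(G)\leqslant 5$ then gives $n\leqslant 8$. On the other hand, $(G_i)_u=G_u$ being an edge for every $u\in V(G_i)$ forces $|V(G_i)|\geqslant 4$, with equality if and only if $G_i\cong 2K_2$; hence $n=|V(G_1)|+|V(G_2)|\geqslant 8$. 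Therefore $n=8$ and $G\cong 2K_2*2K_2$, which is exactly the configuration excluded by the second pseudo-planarity condition --- contradiction. Without this (or an equivalent) argument, your proof does not close.
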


\begin{proof} If $\alpha(G) = 1$, then $G \cong K_n$ where $n=|V(G)|$. In this case we have
$$\k(\Delta(G)) = -1+n.$$
Since $\Delta(G)$ is Eulerian, we get $n=2$. Hence, $G$ is Gorenstein.

Assume that $\alpha(G)=2$. By Lemma $\ref{Stanley}$, it remain to show that $G$ is Cohen-Macaulay. For any vertex $v$, note that $\alpha(G_v)=1$ and $\Delta(G_v)$ is Eulerian. Therefore, $G_v$ is Gorenstein as in the previous case. By Lemma $\ref{Reisner}$ we have $\Delta(G)$ is Cohen-Macaulay if
$$\h_{-1}(\Delta(G);k) = \h_0(\Delta(G);k) = 0.$$

Since $\Delta(G)$ has at least one vertex, we have $\h_{-1}(\Delta(G);k) = 0$. In order to prove $\h_{0}(\Delta(G);k) = 0$ it suffices to show $\Delta(G)$ is connected.

Assume on contrary that $\Delta(G)$ is disconnected. Then we would have two proper subgraphs of $G$, say $G_1$ and $G_2$, so that $\alpha(G_1)= \alpha(G_2)$ and $G = G_1 * G_2$.

Let $v$ be any vertex of $G$. Since $G_v$ is an edge, we have $\deg_G(v) = n-3$. Hence, $G$ is $(n-3)$-regular. As $G[N(v)]$ is connected and $G$ is pseudo-planar, we have $n-3\leqslant 5$, and hence $n\leqslant 8$.

For every vertex $u$ of $G_1$ we have $G_u = (G_1)_u$, therefore $(G_1)_u$ is an edge. It implies that $\upsilon(G_1)\geqslant 4$ and the equality occurs if and only if $G_1 \cong 2K_2$. Similarly, $\upsilon(G_2) \geqslant 4$ and the equality occurs if and only if $G_2 \cong 2K_2$.

Because $n = \upsilon(G) = \upsilon(G_1)+\upsilon(G_2)$ and $n\leqslant 8$, we deduce that $n=8$, $G_1 \cong 2K_2$ and $G_2\cong 2K_2$. Consequently, $G \cong 2K_2 * 2 K_2$. This contradicts the pseudo-planar property of $G$. 	Therefore, $\Delta(G)$ must be connected, as required.
\end{proof}

\begin{lem} \label{A2} Let $G$ be a locally Gorenstein graph in $W_2$, $X$ an independent set of $G$ and an integer $i <\dim(\Delta(G))$. Then for all $\omega\in \mathcal C_i(\Delta(G);k)$ with $\partial \omega = 0$ can be written as $\omega = \partial \eta + \omega'$ where
$\eta\in \mathcal C_{i+1}(\Delta(G);k)$ and $\omega' \in \mathcal C_i(\Delta(G)\setminus X;k)$.
\end{lem}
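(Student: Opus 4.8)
The plan is to work throughout inside $\Delta := \Delta(G)$, writing $d := \dim(\Delta)$, and to peel the vertices of $X$ off the cycle $\omega$ one at a time, controlling at each stage a single reduced homology group of a link. Fix an ordering $X = \{x_1,\dots,x_m\}$ and set $\Delta_j := \Delta\setminus\{x_{j+1},\dots,x_m\}$ for $0\le j\le m$, so that $\Delta_m=\Delta$ and $\Delta_0 = \Delta\setminus X$. I would construct $i$-cycles $\omega=\omega^{(m)},\omega^{(m-1)},\dots,\omega^{(0)}$ with $\omega^{(j)}\in\mathcal C_i(\Delta_j;k)$ and chains $\eta_j\in\mathcal C_{i+1}(\Delta;k)$ satisfying $\omega^{(j)}=\partial\eta_j+\omega^{(j-1)}$. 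Telescoping gives $\omega=\partial(\eta_1+\cdots+\eta_m)+\omega^{(0)}$, and as $\omega^{(0)}$ avoids every vertex of $X$ this is the asserted decomposition with $\eta=\sum_j\eta_j$ and $\omega'=\omega^{(0)}$.

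For the removal step I would split a cycle along $x_j$. Write $\omega^{(j)}=e_{x_j}\wedge\alpha_j+\beta_j$, gathering the faces that contain $x_j$ into $e_{x_j}\wedge\alpha_j$ (so $\alpha_j\in\mathcal C_{i-1}(\lk_{\Delta_j}(x_j);k)$) and those avoiding $x_j$ into $\beta_j$. The Leibniz rule (\ref{DF}), together with the fact that $\partial e_{x_j}=e_\emptyset$ is the generator of $\mathcal C_{-1}$ and hence a unit for $\wedge$, yields $\partial(e_{x_j}\wedge\alpha_j)=\alpha_j-e_{x_j}\wedge\partial\alpha_j$; separating the $x_j$-containing part from the $x_j$-free part of $\partial\omega^{(j)}=0$ then forces $\partial\alpha_j=0$, i.e. $\alpha_j$ is an $(i-1)$-cycle of the link $\lk_{\Delta_j}(x_j)$. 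Granting $\alpha_j=\partial\gamma_j$ for some $\gamma_j\in\mathcal C_i(\lk_{\Delta_j}(x_j);k)$, I set $\eta_j:=-(e_{x_j}\wedge\gamma_j)$ and $\omega^{(j-1)}:=\gamma_j+\beta_j$; since $e_{x_j}\wedge\alpha_j=\gamma_j-\partial(e_{x_j}\wedge\gamma_j)$, this produces $\omega^{(j)}=\partial\eta_j+\omega^{(j-1)}$, and $\omega^{(j-1)}$ is again an $i$-cycle lying in $\Delta_{j-1}$ because both $\gamma_j$ and $\beta_j$ avoid $\{x_j,x_{j+1},\dots,x_m\}$.

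The main obstacle is exactly the step I granted: I must bound $\alpha_j$ by a chain $\gamma_j$ that still avoids the already-removed vertices $x_{j+1},\dots,x_m$ (and $x_j$ itself), for otherwise the correction reintroduces them and the procedure fails to terminate — this is precisely where a crude one-vertex-at-a-time induction breaks down. The resolution is to identify the link as $\lk_{\Delta_j}(x_j)=\Delta(G_{x_j})\setminus\{x_{j+1},\dots,x_m\}$ and to exploit the structure of the localization. By Lemma \ref{ML}, $G_{x_j}$ is Gorenstein with $\dim\Delta(G_{x_j})=d-1$, and by Lemma \ref{WLC} it lies in $W_2$, hence has no isolated vertex, so $\Delta(G_{x_j})=\core(\Delta(G_{x_j}))$. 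When $j<m$, the vertices $x_{j+1},\dots,x_m$ are neither equal nor adjacent to $x_j$ (as $X$ is independent), so they lie in $V(G_{x_j})$ and form a nonempty independent set there; thus $\Delta(G_{x_j})$ restricted to $\{x_{j+1},\dots,x_m\}$ is a full simplex, i.e. a cone, and Lemma \ref{ZV} gives $\h_{i-1}(\lk_{\Delta_j}(x_j);k)=0$, making $\alpha_j$ a boundary. For the terminal step $j=m$ the removed set is empty and $\lk_{\Delta_m}(x_m)=\Delta(G_{x_m})$; here I instead use that $G_{x_m}$ is Cohen--Macaulay and that $i-1<d-1=\dim\Delta(G_{x_m})$, so Reisner's criterion (Lemma \ref{Reisner}) gives $\h_{i-1}(\Delta(G_{x_m});k)=0$ and again $\alpha_j=\partial\gamma_j$.

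Finally I would dispose of the degenerate ranges. If $i=-1$ then no face occurring in $\omega$ meets $X$ and there is nothing to prove, while the constraint $i<d$ forces $d\ge 1$; hence the recursion above covers all relevant indices $0\le i<d$, and the construction terminates with $\omega'=\omega^{(0)}\in\mathcal C_i(\Delta\setminus X;k)$ as required.
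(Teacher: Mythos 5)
Your proof is correct, and it rests on exactly the two homological inputs that drive the paper's own argument: Lemma \ref{ZV} applied to a localization $\Delta(G_S)$ minus a nonempty independent set whose restriction is a simplex (hence a cone), and Reisner's criterion (Lemma \ref{Reisner}) for the one step where nothing is removed. What differs is the induction scheme. The paper decomposes $\omega=\sum_{S\subseteq X}e_S\wedge\omega_S$ over all subsets of $X$ at once, picks a maximal nonempty $F$ with $\omega_F\ne 0$, and kills $\omega_F$ inside $\Delta(G_F)\setminus(X\setminus F)$, arguing that the correction only introduces terms indexed by proper subsets of $F$; you instead peel off one vertex $x_j$ at a time and kill $\alpha_j$ inside $\lk_{\Delta_j}(x_j)=\Delta(G_{x_j})\setminus\{x_{j+1},\dots,x_m\}$. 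Your dichotomy $j<m$ (Lemma \ref{ZV}, removed set nonempty) versus $j=m$ (Reisner, removed set empty, so \ref{ZV} is unavailable) mirrors the paper's split $F\subsetneq X$ versus $F=X$. The linear peeling is arguably cleaner bookkeeping: each correction $\gamma_j$ manifestly lies in $\Delta_{j-1}$, so termination is immediate, whereas the paper must track that its corrections descend in the subset poset. You also correctly identified and discharged the only delicate point --- that $\gamma_j$ must avoid the already-removed vertices, which is exactly what vanishing of $\h_{i-1}$ of the \emph{restricted} link buys --- and your verification of the hypotheses of \ref{ZV} ($G_{x_j}$ Gorenstein with $\core(\Delta(G_{x_j}))=\Delta(G_{x_j})$, via Lemmas \ref{ML} and \ref{WLC}) and your disposal of the degenerate case $i=-1$ are both in order.
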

\begin{proof} The lemma is trivial if $X=\emptyset$, so we assume that $X\ne\emptyset$. Let $\Delta := \Delta(G)$. We represent $\omega$ as:
$$\omega = \sum_{S\subseteq X}e_S\wedge \omega_S$$
where $\omega_S\in \mathcal C_{i-|S|}(\Delta(G_S\setminus X);k)$.

If there is $\emptyset\ne F\subseteq X$ such that $\omega_F\ne 0$, then we take such an $F$  such that $|F|$ is maximal. By Equation $(\ref{DF})$ we get
$$\partial \omega = \sum_{S\subseteq X}\left(\partial e_S\wedge \omega_S  + (-1)^{|S|}e_S\wedge \partial\omega_S \right)= 0.$$
Which implies $\partial \omega_F=0$. 

Next, we claim that
\begin{equation} \label{EQ-01}
\h_{i-|F|}(\Delta(G_F\setminus X);k) =0.
\end{equation}

Indeed, we have $G_F$ is Gorenstein because $F\ne\emptyset$. By Lemma $\ref{WLC}$ we imply that $G_F$ has no isolated vertices, whence $\core(\Delta(G_F)) = \Delta(G_F)$. By Lemma $\ref{LC}$ we have $\alpha(G_F)=\alpha(G)-|F|$, so $\dim(\Delta(G_F)) = \dim(\Delta) - |F|$.  In particular, $i - |F| <\dim(\Delta(G_F))$. If $F=X$, then $G_F\setminus X = G_F$ is Cohen-Macaulay. By Lemma $\ref{Reisner}$ we have $\h_{i-|F|}(\Delta(G_F\setminus X);k)= \h_{i-|F|}(\Delta(G_F);k)=0$.

If $F$ is a proper subset of $X$, then $G_F\setminus X = G_F\setminus (X\setminus F)$. Since $X\setminus F$ is an independent set of $G_F$, by Lemma $\ref{ZV}$ we have $\h_{i-|F|}(\Delta(G_F\setminus X);k)=0$, as claimed.

We now prove the lemma. By Equation $(\ref{EQ-01})$, we get $\omega_F =\partial( \eta_F)$
 for some $\eta_F \in \mathcal C_{i-|F|+1}(\Delta(G_F\setminus X); k)$. Write $F=\{a_1,\ldots,a_s\}$, where $a_1 <\cdots<a_s$,  then
$$\partial e_F = \sum_{i=1}^s(-1)^{i-1}e_{a_1} \wedge \cdots \wedge \widehat{e}_{a_i} \wedge \cdots \wedge e_{a_s} = \sum_{i=1}^s (-1)^{i-1}e_{F\setminus\{a_i\}}.$$
By Equation $(\ref{DF})$ we have
$$\partial(e_F\wedge \eta_F) = \partial e_F \wedge \eta_F + (-1)^{|F|}e_F \wedge \partial \eta_F = \partial e_F \wedge \eta_F + (-1)^{|F|}e_F \wedge \omega_F,$$
so
$$\omega -\partial((-1)^{|F|}e_F\wedge \eta_F) = \sum_{i=1}^s e_{F\setminus\{a_i\}}  \wedge ((-1)^{|F|+i} \eta_F)+\sum_{S\subseteq X, S\ne F}e_S\wedge \omega_S.$$

Note that $(-1)^{|F|}e_F\wedge \eta_F \in \mathcal C_{i+1}(\Delta;k)$. By repeating this process after finitely many steps, we can find an element $\eta\in \mathcal C_{i+1}(\Delta;k)$ such that $\omega - \partial \eta \in \mathcal C_i(\Delta(G)\setminus X;k)$, and hence $\omega=\partial \eta +\omega'$ for some $\omega'\in C_i(\Delta(G)\setminus X;k)$, as required.
\end{proof}

\begin{lem} \label{A3} Let $G$ be a locally Gorenstein graph in $W_2$. Let $v$ be a vertex and let $H:=G[N(v)]$. Assume that there is an independent set $X$ of $H$ such that for every nonempty independent sets $S$ of $H\setminus X$ we have either $H_S$ has an isolated vertex or $H_S$ is empty. Then $\h_i(\Delta(G);k) = 0$ for all $i < \dim(\Delta(G))$.
\end{lem}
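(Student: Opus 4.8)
The plan is to show that every reduced cycle in degree $i<\dim\Delta(G)=:d$ is a boundary, by first pushing it off the set $X$ with Lemma \ref{A2}, then absorbing the remainder into the cone over $v$; the localization hypothesis on $H_S$ is exactly what clears the intermediate terms, via Lemma \ref{ZV}. First I note that $X$, being independent in the induced subgraph $H=G[N(v)]$, is independent in $G$, and that $\Delta(G)\setminus X=\Delta(G\setminus X)$. The case $i=-1$ is trivial since $\Delta(G)$ is nonempty, so fix $0\le i<d$ and a cycle $\omega\in\mathcal C_i(\Delta(G);k)$. By Lemma \ref{A2} I may write $\omega=\partial\eta+\omega'$ with $\eta\in\mathcal C_{i+1}(\Delta(G);k)$ and $\omega'\in\mathcal C_i(\Delta(G\setminus X);k)$, and $\partial\omega=0$ forces $\partial\omega'=0$. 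It thus suffices to prove that $\omega'$ is a boundary in $\Delta(G)$.

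Next I decompose $\omega'$ along $A':=V(H)\setminus X$, mimicking the proof of Lemma \ref{A2} but with $v$ as the coning vertex. Each face $F\in\Delta(G\setminus X)$ splits as $F=T\sqcup F'$ with $T=F\cap A'$ and $F'$ avoiding all of $N(v)=X\cup A'$, giving
$$\omega'=\sum_{T}e_T\wedge\psi_T,\qquad \psi_T\in\mathcal C_{i-|T|}(\Delta(G_T\setminus N(v));k),$$
where $T$ runs over the independent sets of $G$ contained in $A'$, i.e. over the faces of $\Delta(H\setminus X)$. The term $T=\emptyset$ lies in $\Delta(G\setminus N(v))$, which is a cone over the (now isolated) vertex $v$. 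Choosing a nonempty $S$ of maximal cardinality with $\psi_S\neq0$ and collecting in $\partial\omega'=0$ the terms whose $A'$-part equals $S$, all competing contributions vanish by maximality (the $\partial e_T\wedge\psi_T$ with $T=S\cup\{a\}$ need $\psi_T=0$), leaving $(-1)^{|S|}e_S\wedge\partial\psi_S=0$; hence $\partial\psi_S=0$.

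The heart of the argument is the vanishing $\h_{i-|S|}(\Delta(G_S\setminus N(v));k)=0$, where I use $G_S\setminus N(v)=G_S\setminus V(H_S)$ since $S\subseteq N(v)$. As $|S|\le i+1\le d<\alpha(G)$, Lemma \ref{ML} gives that $G_S$ is Gorenstein and Lemma \ref{WLC} that $G_S$ is in $W_2$, hence has no isolated vertices, so $\Delta(G_S)=\core(\Delta(G_S))$; also $\dim\Delta(G_S)=d-|S|$ by Lemma \ref{LC}. Now $S$ is independent in $H\setminus X$, so the hypothesis applies: if $H_S$ is empty then $G_S\setminus N(v)=G_S$ is Cohen--Macaulay and $i-|S|<d-|S|=\dim\Delta(G_S)$, so Lemma \ref{Reisner} yields the vanishing; if instead $H_S$ has an isolated vertex $w$, then $\Delta(G_S)|_{V(H_S)}=\Delta(H_S)$ is a cone over $w$, and Lemma \ref{ZV} applied to the Gorenstein complex $\Delta(G_S)$ with the subset $V(H_S)$ gives $\h_j(\Delta(G_S)\setminus V(H_S);k)=0$ for all $j$, in particular $j=i-|S|$. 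Either way $\psi_S=\partial\xi$ for some $\xi\in\mathcal C_{i-|S|+1}(\Delta(G_S\setminus N(v));k)$, and one checks $e_S\wedge\xi\in\mathcal C_{i+1}(\Delta(G\setminus X);k)$.

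Finally, subtracting $(-1)^{|S|}\partial(e_S\wedge\xi)$ from $\omega'$ cancels $e_S\wedge\psi_S$ and introduces only terms whose $A'$-part has size $<|S|$; iterating finitely many times, exactly the descent in Lemma \ref{A2}, replaces $\omega'$ modulo boundaries in $\Delta(G)$ by a cycle $\omega''\in\mathcal C_i(\Delta(G\setminus N(v));k)$, which is a boundary because $\Delta(G\setminus N(v))$ is an acyclic cone over $v$. Hence $\omega$ is a boundary and $\h_i(\Delta(G);k)=0$. I expect the main obstacle to be the third step: recognizing that $G_S$ is Gorenstein with trivial core so that Lemma \ref{ZV} is available, and that the two clauses of the hypothesis on $H_S$ correspond precisely to the two routes to the vanishing (Cohen--Macaulayness of $G_S$ when $H_S$ is empty, versus $\Delta(H_S)$ being a cone when $H_S$ has an isolated vertex); the bookkeeping of the descent in the first and last steps is routine given Lemma \ref{A2}.
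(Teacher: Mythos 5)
Your proposal is correct and follows essentially the same route as the paper: clear $X$ via Lemma \ref{A2}, then rerun the descent of Lemma \ref{A2} over the faces of $\Delta(H\setminus X)$ using the key vanishing $\h_{i-|S|}(\Delta(G_S)\setminus V(H);k)=0$ (obtained from Lemma \ref{Reisner} when $H_S$ is empty and from Lemma \ref{ZV} when $H_S$ has an isolated vertex), and finally absorb the remainder into the acyclic cone $\st_{\Delta}(v)=\Delta(G)\setminus V(H)$. The only difference is that you spell out the iteration and the hypotheses ($G_S\in W_2$, hence $\core(\Delta(G_S))=\Delta(G_S)$, and $|S|<\alpha(G)$) that the paper leaves implicit by citing "the same argument as in Lemma \ref{A2}".
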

\begin{proof} Let $\Delta := \Delta(G)$ and $d:= \dim(\Delta(G))$. We first claim that 
\begin{equation} \label{EQ-02}
\h_{i-|S|}(\Delta(G_S)\setminus V(H);k) = 0 \ \text{ for any } \emptyset \ne S \in \Delta(H)\setminus X.
\end{equation}

Indeed, observe that $G_S$ is Gorenstein because $G$ is locally Gorenstein. By Lemma $\ref{ML}$ we have $\alpha(G_S) = d - |S|$. If $H_S$ is empty. Since $i-|S| < d - |S| = \dim(\Delta(G_S))$, by Lemma $\ref{Reisner}$ we get $\h_{i-|S|}(\Delta(G_S)\setminus V(H);k) =\h_{i-|S|}(\Delta(G_S);k) =  0$.

If $H_S$ is not empty, then $H_S$ has an isolated vertex, and so $\Delta(G_S)|_{V(H_S)}$ is a cone. By Lemma $\ref{ZV}$ we get $\h_{i-|F|}(\Delta(G_S)\setminus V(H);k) = \h_{i-|F|}(\Delta(G_F\setminus V(H_S));k)=0$, as claimed.

We now prove prove $\h_i(\Delta;k)=0$. It suffices to show that if $\omega\in\mathcal C_i(\Delta;k)$ with $\partial \omega=0$, then $\omega = \partial \zeta$ for some $\zeta\in \mathcal C_{i+1}(\Delta;k)$. 

By Lemma $\ref{A2}$, $\omega = \partial \eta + \omega'$ where $\eta\in C_{i+1}(\Delta;k)$ and
$\omega' \in C_i(\Delta(G)\setminus X;k)$. Since $\omega' \in C_i(\Delta(G)\setminus X;k)$, we can write $\omega'$ as
$$\omega' = \sum_{S \in \Delta(H)\setminus X} e_S\wedge \omega_S$$
where $\omega_S\in \mathcal C_{i-|S|}(\Delta(G_S\setminus V(H));k)$. By using the same argument as in the proof of Lemma $\ref{A2}$ where the equation $(\ref{EQ-01})$ is replaced by the equation $(\ref{EQ-02})$, we have $\eta'\in \mathcal C_{i+1}(\Delta;k)$ such that $$\omega' - \partial \eta' \in \mathcal C_i(\Delta(G)\setminus V(H);k).$$

Note that $\Delta(G)\setminus V(H) = \st_{\Delta}(v)$ and $\h_i(\st_{\Delta}(v);k) = 0$. Since $\partial (\omega' -\partial \eta') = \partial \omega' - \partial ^2 \eta' = 0$, there is $\xi\in \mathcal C_{i+1}(\st_{\Delta}(v);k)\subseteq \mathcal C_{i+1}(\Delta;k)$ such that
$\omega' - \partial \eta' = \partial \xi$, and hence $\omega' = \partial (\eta'+\xi)$.

Together with $\omega = \partial \eta+\omega'$, we obtain $\omega = \partial(\eta+\eta' +\xi)$, as required.
\end{proof}

\begin{rem} If $G[N(v)]$ has an isolated vertex, say $x$, then the set $X =\{x\}$ satisfies all conditions in the lemma $\ref{A3}$.
\end{rem}

A graph is {\it totally disconnected} if there is no path connecting any pair of vertices.

\begin{lem} \label{A4} Let $G$ be a graph with at most $5$ vertices and $\k(\Delta(G)) = 0$. Assume that for any independent set $X$ of $G$ there is a nonempty independent set $S$ of $G\setminus X$ such that neither $G_S$ has an isolated vertex nor $G_S$ is empty. Then $G$ must be either a pentagon with only one chord or a two $3$-cycles with  only one vertex in common.
\end{lem}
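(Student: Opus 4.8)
The plan is to attack this as a finite classification problem: the hypothesis ``at most $5$ vertices'' together with $\k(\Delta(G))=0$ drastically restricts the candidates, so I would enumerate the possible graphs $G$ by their number of vertices $\upsilon(G)=n \leqslant 5$ and then use the combinatorial obstruction hypothesis to eliminate all but the two target graphs. First I would record what the condition $\k(\Delta(G))=0$ says: since $\k(\Delta(G)) = \sum_{S\in\Delta(G)}(-1)^{|S|-1}$, it counts independent sets with signs, and together with the fact (implicit from the surrounding lemmas, e.g.\ Lemma~\ref{E1} and Corollary~\ref{E2}) that such a $G$ should be well-covered and in $W_2$, this forces $G$ to have no isolated vertices and to have all maximal independent sets of the same size $\alpha(G)$. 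I would first dispose of the small cases $n \leqslant 4$: for $n=1,2$ the graph is too small (a vertex or an edge gives $\k \neq 0$ or violates the hypothesis), and for $n=3,4$ a direct check of the finitely many graphs on that many vertices shows none can simultaneously satisfy $\k(\Delta(G))=0$ and the ``no good $S$ exists'' obstruction without having an isolated vertex somewhere.

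The substance is the case $n=5$. Here I would observe that the obstruction hypothesis is precisely the negation of the hypothesis of Lemma~\ref{A3} (with $v$'s neighborhood replaced by $G$ itself): it says that \emph{for every} independent set $X$, there is a nonempty independent $S$ in $G\setminus X$ with $G_S$ nonempty and having no isolated vertex. Taking $X=\emptyset$ already tells us that $G$ itself admits a nonempty independent $S$ with $G_S$ nonempty and isolated-vertex-free, and more importantly, taking $X$ to be any single vertex or edge must also produce such an $S$ inside the localization. I would translate the condition $\k(\Delta(G))=0$ on $5$ vertices into a count: if $G$ is well-covered with independence number $\alpha$, then $\k(\Delta(G)) = \sum_{i=0}^{\alpha}(-1)^{i-1}f_{i-1}$, and solving this with $f_{-1}=1$, $f_0 = 5$ pins down the admissible face-number vectors. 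The two survivors---the pentagon $C_5$ with one chord and the two triangles sharing a vertex (the ``bowtie'')---both have $\alpha(G)=2$ and exactly five vertices; I would verify that for each, $\k(\Delta(G))=0$ holds and that the obstruction hypothesis is genuinely satisfied (so they are not vacuously excluded), and then show every other $5$-vertex well-covered graph fails the obstruction by exhibiting, for a suitable $X$, that every nonempty independent $S$ of $G\setminus X$ leaves $G_S$ either empty or with an isolated vertex.

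The main obstacle I anticipate is the bookkeeping in the $n=5$ elimination step: there are many graphs on five vertices, and for each non-target candidate I must produce an explicit $X$ that defeats the obstruction, which is the reverse (existential-over-$X$, universal-over-$S$) of what the hypothesis grants. To keep this manageable I would organize the search by $\alpha(G)$. The constraint $\k(\Delta(G))=0$ with only five vertices severely limits $\alpha(G)$ (it cannot be too large, since large $\alpha$ forces many high-dimensional faces and the alternating sum cannot vanish), so I expect only $\alpha(G)=2$ to survive, at which point $G$ is determined by which of the $\binom{5}{2}=10$ pairs are edges, and well-coveredness plus the degree/localization constraints cut this down to a short list that I can check by hand. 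The verification that the two named graphs do satisfy the obstruction hypothesis---rather than being excluded along with the rest---is the delicate endpoint, and I would carry it out by directly listing, for each, the independent sets $S$ that witness a nonempty isolated-vertex-free $G_S$ for every choice of $X$.
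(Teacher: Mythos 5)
Your overall strategy --- treat this as a finite check over all graphs on at most $5$ vertices, prune with $\k(\Delta(G))=0$, and kill each remaining non-target graph by exhibiting an independent set $X$ for which every nonempty independent $S$ of $G\setminus X$ leaves $G_S$ empty or with an isolated vertex --- is exactly the paper's strategy. But your key organizational step contains a genuine error. You propose to deduce from Lemma \ref{E1} and Corollary \ref{E2} that $G$ is well-covered, indeed in $W_2$, and then to restrict the $n=5$ enumeration to well-covered graphs organized by $\alpha(G)$. Those lemmas require $\Delta(G)$ to be semi-Eulerian, whereas the present lemma only assumes $\k(\Delta(G))=0$, which is strictly weaker, so the deduction is unjustified. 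Worse, it is false: one of the two target graphs, the two $3$-cycles sharing a vertex $c$, is \emph{not} well-covered ($\{c\}$ is a maximal independent set of size $1$ while $\alpha(G)=2$), and hence certainly not in $W_2$. The proposed reduction would therefore eliminate one of the very graphs you are supposed to end up with, and it would also silently skip non-well-covered candidates (various trees and unicyclic graphs) that must instead be excluded by producing an explicit defeating $X$.

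The repair is to drop that reduction entirely. The only structural consequence of the obstruction hypothesis available for free is that $G$ has no isolated vertex: if $w$ were isolated, then $X=\{w\}$ defeats the hypothesis, since $w$ remains an isolated vertex of $G_S$ for every nonempty independent set $S$ of $G\setminus w$. The paper then runs through \emph{all} graphs on $n\leqslant 5$ vertices, including the disconnected ones and the trees, organized by cycle structure (presence of a $5$-, $3$-, or $4$-cycle, else a tree), in each case either computing $\k(\Delta(G))\ne 0$ or exhibiting a defeating $X$ (typically the set of leaves, or a single vertex off a triangle). Your proposal also defers essentially all of this case-by-case verification to a check ``by hand''; since that verification is the entire content of the lemma, the proposal as written is a plan rather than a proof, and its one concrete structural shortcut does not survive scrutiny.
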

\begin{proof} Observe that $G$ has no isolated vertices by the assumption. Let $n := |V(G)|$. We consider four possible cases:

{\it Case 1}: $n = 2$. In this case $G$ is an edge, but then $\k(\Delta(G)) = 1$.

{\it Case 2}: $n =3$. We imply that $G$ is connected since $G$ has no isolated vertices. Thus, $G$ is either a $3$-cycle or a $3$-path, but $\k(\Delta(G)) \ne 0$ in both cases.

{\it Case 3}: $n =4$. If $G$ is disconnected, then $G$ must be two disjoint edges. In this case, $\k(\Delta(G)) = -1$. Hence, we assume that $G$ is connected. Now if $G$ has a $3$-cycle $C_3$, let $x$ be a vertex not lying on $C_3$. Then for every nonempty independent set $S$ of $G\setminus x$, we see that $G_S$ is either just $x$ or empty. Thus, $G$ has no any $3$-cycles, and thus either $G$ has a $4$-cycle or $G$ is a tree. If $G$ has a $4$-cycle, then $G_S$ is just one vertex or empty for all nonempty independent set $S$ of $G$. If $G$ is a tree, then let $X$ be the set of leaves of $G$. Observe that $|X| \geqslant 2$, so that $G_S$ is either empty or totally disconnected for any non-empty independent set $S$ of $G\setminus X$.

{\it Case 4}: $n = 5$. If $G$ is disconnected, then $G$ has just two components of size $2$ and $3$. Therefore, $G$ is either an edge and a triangle or and edge and a $3$-path. In both cases, we can see that $\k(\Delta(G))\ne 0$. Hence, we assume that $G$ is connected. Let $V(G) =\{a,b,c,d,e\}$.

If $G$ has a $5$-cycle $C_5$, so that $\alpha(G) \leqslant 2$. Note that $\k(\Delta(C_5)) = -1$, so $G$ must have at least one chord. If $G$ has only one chord, then $\k(G)=0$. If $G$ has at least two chords, then $f_1(\Delta(G)) \leqslant 3$, and then $\k(\Delta(G)) = -1+5-f_1(\Delta(G)) > 0$. It follows that $G$ have only one chord.

If $G$ has no any $5$-cycles, but it has a $3$-cycle $C_3$. Assume that $E(C_3) = \{ab,bc,ca\}$. Since $G$ is connected, we may assume that $d$ is adjacent to $a$. If $e$ is not adjacent to $d$, then $\{e,d\}$ is an independent set of $G$. In this case, $G_S$ is totally disconnected for any nonempty independent set $S$ of $G\setminus\{d,e\}$. Thus, $e$ is must be adjacent to $d$. Since $G$ has no $5$-cycles, $e$ is adjacent to neither $b$ nor $c$. Now if $e$ is not adjacent to $a$, then we have
$\k(\Delta(G)) = 1$. Hence, $e$ is adjacent to $a$, and hence $G$ is two $3$-cycles with only one vertex in common. Note that $\k(\Delta(G))=0$ for that graph $G$.

If $G$ has neither $3$-cycles nor $5$-cycles, then either $G$ has only $4$-cycles or $G$ is a tree. If $G$ has a $4$-cycle, say $C_4$. We may assume that $E(C_4) = \{ab,bc,cd,da\}$. Since $G$ is connected, we may assume that $e$ is adjacent to $a$. Since $G$ has neither $3$-cycles nor $5$-cycles, the edge set of $G$ is just $\{ab,bc,cd,da,ae\}$. Then, $\{c,e\}$ is an independent subset of $G$ and $G_S$ are totally disconnected for any nonempty independent set $S$ of $G\setminus\{d,e\}$.

Finally, $G$ is a tree. If $G$ has exactly two leaves, then $G$ must be the $5$-path. But then $\k(\Delta(G)) = -1$, hence $G$ has at least three leaves. Let $X$ be the set of leaves of $G$, in particular, $X$ is  an independent set of $G$. Since $G\setminus X$ is connected and has at most two vertices, therefore $G_S$ is totally-disconnected for all nonempty independent sets $S$ of $G\setminus X$.

In summary, $G$ is either a $5$-cycle with only one chord or a two $3$-cycles with only one vertex in common.
\end{proof}

A {\it pure} simplicial complex $\Delta$ of dimension $d$ is called {\it connected in codimension one} if for any two facets $F$ and $H$ of $\Delta$, there exists a sequence of facets $F = F_0, F_1, \cdots, F_{p-1}, F_p = H$ such that $|Fi \cap F_{i+1}| = d$ for each $i$. By using the same argument as in the proof of \cite[Theorem $9.1.12$]{HH2} we obtain.

\begin{lem} \label{C1} Let $G$ be a well-covered graph with $\alpha(G)\geqslant 2$. If $G$ is locally Cohen-Macaulay and $\Delta(G)$ is connected, then $\Delta(G)$ is connected in codimension one.
\end{lem}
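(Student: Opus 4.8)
The plan is to argue by induction on $d := \dim(\Delta(G)) = \alpha(G)-1$, following the classical proof that a Cohen-Macaulay complex is connected in codimension one. Write $\Delta := \Delta(G)$. In the base case $d = 1$ the facets of $\Delta$ are edges, and ``connected in codimension one'' asks exactly that any two edges be joined by a chain of edges meeting pairwise in a common vertex; since every $1$-face lies in a facet (purity), this is precisely the hypothesis that $\Delta$ is connected, so nothing beyond connectedness is needed here.

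For the inductive step I would assume $d \geqslant 2$ and that the statement holds in dimension $d-1$. Call two facets $F$ and $H$ \emph{equivalent} when they can be joined by a sequence of facets whose consecutive members meet in a face of dimension $d-1$; this is an equivalence relation on the set of facets, and the goal is to show there is a single class. The first key step is to prove that all facets through a fixed vertex $v$ lie in one class. The link $\lk_{\Delta}(v) = \Delta(G_v)$ is Cohen-Macaulay because $G$ is locally Cohen-Macaulay; moreover $G_v$ is well-covered with $\alpha(G_v) = d \geqslant 2$ (Lemma \ref{LC}), it is itself locally Cohen-Macaulay (every localization of a Cohen-Macaulay graph is Cohen-Macaulay), and $\Delta(G_v)$ is connected (Lemma \ref{Reisner} applied to the empty face, since $0 < d-1$). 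The induction hypothesis then gives that $\Delta(G_v)$ is connected in codimension one; adjoining $v$ to each facet of a connecting sequence in $\lk_{\Delta}(v)$ raises every intersection by one, yielding a connecting sequence of facets of $\Delta$, so the facets of $\Delta$ containing $v$ indeed form a single equivalence class.

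It then remains to bridge facets that share no vertex, and this is where the global connectedness of $\Delta$ is decisive. Given facets $F$ and $H$, I would pick $u \in F$ and $w \in H$ and a walk $u = u_0, u_1, \dots, u_m = w$ in the $1$-skeleton of $\Delta$. By purity each edge $\{u_j, u_{j+1}\}$ lies in some facet $E_j$. Then $F$ and $E_0$ share $u_0$, each $E_j$ and $E_{j+1}$ share $u_{j+1}$, and $E_{m-1}$ and $H$ share $w$, so the previous step places $F, E_0, E_1, \dots, E_{m-1}, H$ in one class, whence $F$ and $H$ are equivalent.

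The main point to watch is the transfer of all three hypotheses (well-covered, locally Cohen-Macaulay, and connected) to the localization $G_v$, so that the induction genuinely applies to the link $\lk_{\Delta}(v) = \Delta(G_v)$; connectedness of the link comes for free from its Cohen-Macaulayness, but it must be checked that $\alpha(G_v) \geqslant 2$ to avoid a degenerate link, which is exactly what forces the separation into the base case $d = 1$ and the step $d \geqslant 2$. Once that transfer is in place, the bridging step is a routine consequence of purity together with the standing connectedness hypothesis.
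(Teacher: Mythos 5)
Your proof is correct and is essentially the argument the paper intends: the paper gives no details beyond ``use the same argument as \cite[Theorem 9.1.12]{HH2}'', and that argument is exactly your induction on $\dim\Delta(G)$ via the links $\lk_\Delta(v)=\Delta(G_v)$ (whose hypotheses you correctly transfer using Lemma \ref{LC}, the fact that localizations of Cohen--Macaulay graphs are Cohen--Macaulay, and Reisner's criterion for connectedness of the link), combined with chaining arbitrary facets through shared vertices by global connectedness.
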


We now in a position to prove the main result of this paper.

\begin{thm}\label{GlobalVersion} Let $G$ be a pseudo-planar graph without isolated vertices. Then $G$ is Gorenstein if and only if $\Delta(G)$ is Eulerian.
\end{thm}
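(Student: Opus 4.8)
The plan is to treat the two directions separately, with essentially all the work in the ``if'' direction. For ``only if'', if $G$ is Gorenstein then Lemma~\ref{Stanley} says $\core(\Delta(G))$ is an Euler complex; as $G$ has no isolated vertices, $\Delta(G)=\core(\Delta(G))$, so $\Delta(G)$ is Eulerian. For ``if'', recall from Lemmas~\ref{Reisner} and~\ref{Stanley} that $G$ is Gorenstein precisely when (i) $\h_i(\lk_{\Delta(G)}F;k)=\0$ for all $F\in\Delta(G)$ and all $i<\dim\lk_{\Delta(G)}F$, and (ii) $\Delta(G)$ is Eulerian. Condition (ii) is the hypothesis, so I only need the Cohen--Macaulay condition (i), and I would argue by induction on $d:=\dim\Delta(G)=\alpha(G)-1$. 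An Euler complex is semi-Eulerian (links of Euler complexes are again Euler complexes), so Lemma~\ref{E1} gives $G\in W_2$; in particular $G$ is well-covered. The base cases $\alpha(G)\leqslant 2$ are exactly Lemma~\ref{A1}.

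Assume $\alpha(G)\geqslant 3$. For each vertex $v$ the localization $G_v$ is an induced subgraph, hence pseudo-planar, with $\Delta(G_v)=\lk_{\Delta(G)}(v)$ Eulerian and no isolated vertices (an isolated vertex would make this link a cone, forcing $\k=0$ against the Euler property); since $\alpha(G_v)=\alpha(G)-1$ by Lemma~\ref{LC}, the induction hypothesis shows $G_v$ is Gorenstein, so $G$ is locally Gorenstein. I then verify (i) through Reisner's criterion. For a nonempty face $F$ we have $\lk_{\Delta(G)}F=\Delta(G_F)$ with $G_F$ pseudo-planar (indeed $(G_F)_{S'}=G_{F\cup S'}$, so the two defining conditions are inherited), $\Delta(G_F)$ Eulerian and $\alpha(G_F)<\alpha(G)$; by induction $G_F$ is Gorenstein, hence Cohen--Macaulay, and the required homology vanishes. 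Everything thus reduces to the case $F=\emptyset$: I must prove $\h_i(\Delta(G);k)=0$ for all $i<d$.

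This last vanishing is the conclusion of Lemma~\ref{A3}, so it suffices to find a vertex $v$ and an independent set $X$ of $H:=G[N(v)]$ such that, for every nonempty independent set $S$ of $H\setminus X$, the graph $H_S$ is empty or has an isolated vertex. I would take $v$ with $\delta^*(v)=\delta^*(G)\leqslant 5$. If $H$ has an isolated vertex, the remark after Lemma~\ref{A3} already provides such an $X$. Otherwise $|V(H)|=\delta^*(v)\leqslant 5$ and $\k(\Delta(H))=0$ by Corollary~\ref{E2} (using that $\Delta(G)$ is Eulerian); if some $X$ satisfies the requirement we are done, and if not, Lemma~\ref{A4} forces $H$ to be a pentagon with a single chord or two triangles sharing one vertex.

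The hard part is eliminating these two exceptional neighborhoods, and this is exactly where the planarity hypothesis must be used in full. The plan is to show that if $G[N(v)]$ were one of them, then $G$ would contain an independent set $S$ with $G_S\cong 2K_2*2K_2$ or $\delta^*(G_S)\geqslant 6$, contradicting pseudo-planarity. Concretely, I would use $G\in W_2$, local Gorensteinness, and the constraint $\k(\Delta(G[N(w)]))=0$ at every vertex $w$ (Corollary~\ref{E2}) to determine the forced adjacencies around the two triangles, respectively the chord, and so locate the offending $S$; this combinatorial case analysis, rather than the homological machinery, is the technical heart of the proof. Once both cases are excluded a suitable pair $(v,X)$ always exists, Lemma~\ref{A3} yields $\h_i(\Delta(G);k)=0$ for $i<d$, and together with the nonempty-$F$ cases and Lemma~\ref{Stanley} this shows $G$ is Gorenstein.
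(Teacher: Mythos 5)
Your reduction is correct and matches the paper's up to the decisive point: the only-if direction via Lemma \ref{Stanley}, the induction on $\alpha(G)$ with base case Lemma \ref{A1}, the use of Lemma \ref{E1} to get $G\in W_2$, the derivation of local Gorensteinness from the induction hypothesis, the reduction of Reisner's criterion to $\h_i(\Delta(G);k)=0$ for $i<d$, and the appeal to Corollary \ref{E2} and Lemmas \ref{A3}, \ref{A4} to isolate the two exceptional neighborhoods $H=G[N(v)]$ (pentagon with one chord, two triangles sharing a vertex). But at that point your proposal diverges from the paper and leaves a genuine gap: you propose to \emph{exclude} these two configurations by showing they would force an independent set $S$ with $G_S\cong 2K_2*2K_2$ or $\delta^*(G_S)\geqslant 6$, and you do not carry out that case analysis --- you only announce that it ``is the technical heart of the proof.'' That announced argument is precisely what is missing, and there is no evidence it can be made to work: the paper does not treat these neighborhoods as impossible. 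Instead it accepts that they can occur and proves $\h_i(\Delta;k)=0$ for them directly, by explicit chain-level manipulations (decomposing a cycle $\omega$ along the vertices $x,y,s,z,t$ of $H$, repeatedly invoking Lemma \ref{ZV} and Reisner's criterion on the localizations $G_x$, $G_y$, $G_s$, $G_{\{x,s\}}$ to peel off boundaries until what remains lies in $\mathcal C_i(\st_\Delta(v);k)$, which is acyclic).

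Moreover, even granting your exclusion strategy in spirit, you would still miss a second difficulty the paper has to confront: in the two-triangles case the chain argument only yields $\h_i(\Delta;k)=0$ for $i<d-1$, and the vanishing of $\h_{d-1}(\Delta;k)$ requires a separate argument combining the Euler relation $\dim_k\h_{d-1}=\dim_k\h_d-1$, connectivity in codimension one (Lemma \ref{C1}), and the fact that $\Delta$ is a pseudomanifold (each $(d-1)$-face lies in exactly two facets because $G_F$ is a single edge), which bounds $\dim_k\h_d\leqslant 1$. Your proposal contains neither the direct homological computation nor any substitute for this final step, so the core of the ``if'' direction is not established.
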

\begin{proof} If $G$ is Gorenstein. As $G$  has no isolated vertices, we have $\Delta(G)=\core(\Delta(G))$, so $\Delta(G)$ is Eulerian by Lemma $\ref{Stanley}$.

Conversely, Assume $\Delta(G)$ is Eulerian. By Lemma $\ref{E1}$ we have $G$ is in $W_2$.  We now prove by induction on $\alpha(G)$ that $G$ is Gorenstein. If $\alpha(G) \leqslant 2$, the assertion follows from Lemma $\ref{A1}$. 

Assume that $\alpha(G)\geqslant 3$. Let $\Delta := \Delta(G)$ and $d := \dim(\Delta) = \alpha(G)-1$. Since $\Delta$ is Eulerian, by Lemma $\ref{Stanley}$ we have $\Delta$ is Gorenstein if $\Delta$ is Cohen-Macaulay. Now for every vertex $x$ of $G$ we have $G_x$ is well-covered with $\alpha(G_x) =\alpha(G)-1$ by Lemma $\ref{LC}$. As $\Delta(G_x)$ is Eulerian, by the induction hypothesis we have $G_x$ is Gorenstein. This yields $G$ is locally Gorenstein. Thus,  By Lemma $\ref{Reisner}$ we have $\Delta$ is Cohen-Macaulay if
\begin{equation} \label {ME} \h_i(\Delta;k) = 0 \ \text{ for all } i < d. \end{equation}

Let $v$ be a vertex of $G$ such that $\delta^*(v)$ is minimal and let $H := G[N(v)]$. If $H$ has an isolated vertex, then the equation $(\ref{ME})$ holds by Lemma $\ref{A3}$. Therefore, we assume that $H$ has no isolated vertices, and therefore $H$ has at most $5$ vertices by the pseudo-planar property of $G$.

Since $\Delta$ is Eulerian, by Lemma $\ref{E2}$ we have $\k(\Delta(H)) = 0$. By Lemmas $\ref{A3}$ and $\ref{A4}$, the equation $(\ref{ME})$ holds except for $H$ is either a pentagon with only one chord or a two $3$-cycles with only one vertex in common. We next prove the equation $(\ref{ME})$ holds for such two exceptional graphs $H$.

If $H$ is a pentagon with one chord, assume that $V(H) =\{x,y,z,s,t\}$ and $E(H) = \{xy,yz,zs,st,tx, ys\}$ (see Figure $1$). Let $\omega \in \mathcal C_i(\Delta(G);k)$ such that $\partial \omega = 0$. We will prove that $\omega$ is a boundary. Let $X := V(H)$. First we claim that
\begin{equation}\label{G1}
\omega = \partial \zeta + e_s \wedge \omega_s + \omega',
\end{equation}
where $\zeta\in \mathcal{C}_{i+1}(\Delta;k)$,  $\omega_s \in \mathcal{C}_{i-1}(\Delta(G_s)\setminus X)$ and $\omega' \in \mathcal{C}_{i}(\Delta\setminus X)$.

\medskip

\begin{center}
\includegraphics[scale=0.7]{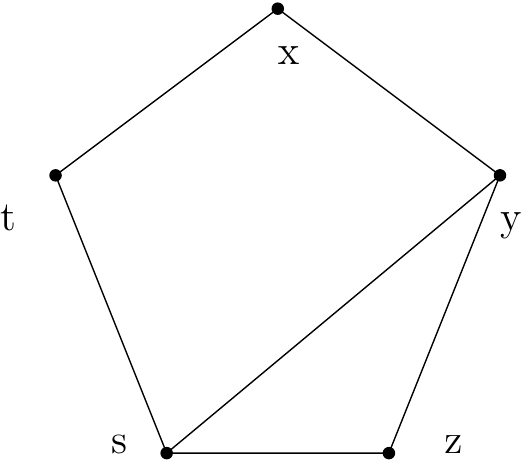}\\
\medskip
{\it Figure 1. A pentagon with one chord.}
\end{center}

\medskip

Indeed, since $\{z,t\}$ is an independent set of $G$, by Lemma $\ref{A2}$ we can write $\omega$ as $\omega = \partial \eta + \psi$ where $\eta\in\mathcal C_{i+1}(\Delta;k)$ and $\psi \in\mathcal C_i(\Delta\setminus\{z,t\};k)$ with $\partial \psi = 0$. We next write $\psi$ as
\begin{equation}\label{G2} 
\psi = e_x \wedge e_s \wedge \psi_{xs} + e_x \wedge \psi_x+ e_y \wedge \psi_y + e_s \wedge \psi_s+ \psi'
\end{equation}
where 
$$\psi_{xs}\in \mathcal C_{i-2}(\Delta(G_{\{x,s\}});k); \psi_x, \psi_y, \psi_s \in \mathcal C_{i-1}(\Delta\setminus X;k); \text{ and } \psi' \in \mathcal C_i(\Delta\setminus X;k).$$ 

As $\partial \psi = 0$, from $(\ref{G2})$ we imply that $\partial \psi_{xs} = 0$. Since $G$ is well-covered and locally Gorenstein, $G_{\{x,s\}}$ is Gorenstein with $$\dim(\Delta(G_{\{x,s\}})) = \dim(\Delta(G))-2=d-2.$$

Since $i-2 < \dim(\Delta(G_{\{x,s\}}))$, $\h_{i-2}(\Delta(G_{\{x,s\}});k) = 0$ by  Lemma $\ref{Reisner}$. This gives $\psi_{xs} =\partial \eta_{xs}$ for some $\eta_{xs}\in \mathcal C_{i-1}(\Delta(G_{\{x,s\}});k)$. Together with $(\ref{G2})$ we have
$$ \psi - \partial (e_x\wedge e_s \wedge \eta_{xs}) = e_x \wedge (\psi_x + \eta_{xs}) +e_y\wedge \psi_y+ e_s \wedge (\psi_s-  \eta_{xs}) +  \psi'.$$

Let $\phi_x:= \psi_x+ \eta_{xs}$ and $\phi_s := \psi_s -  \eta_{xs}$. Then $\phi_x, \phi_s\in \mathcal C_{i-1}(\Delta\setminus X;k)$ and
$$\psi - \partial (e_x\wedge e_s \wedge \eta_{xs})= e_x \wedge \phi_x +e_y\wedge \psi_y+ e_s \wedge \phi_s +  \psi'.$$
Combining with $\partial (\psi - \partial (e_x\wedge e_s \wedge \psi_{xs})) = 0$, we get $\partial \psi_y = 0$. Note that $t$ is an vertex of $G_y$ and $\Delta(G_y)$ is Gorenstein with $\core(\Delta(G_y)) =\Delta(G_y)$. By Lemma $\ref{ZV}$ we have $\h_{i-1}(\Delta(G_y)\setminus t;k)=0$. Therefore,  $\psi_y = \partial \omega_y$ for some $\omega_y\in \mathcal C_i(\Delta(G_y)\setminus t;k)$. This yields
\begin{equation}\label{G3}
\psi +\partial (-e_x\wedge e_s \wedge \eta_{xs}+ e_y\wedge\omega_y) = e_x \wedge \phi_x + e_s \wedge \phi_s + (\omega_y +\psi').
\end{equation}

Since $\omega_y +  \psi' \in \mathcal{C}_{i}(\Delta\setminus X;k)$ and $\partial(\psi + \partial (-e_x\wedge e_s \wedge \psi_{xs}+e_y\wedge\omega_y)) = 0$, from $(\ref{G3})$ we get $\partial \phi_x = 0$. Because $z$ is a vertex of $G_x$, $\h_{i-1}(\Delta(G_x)\setminus \{z\};k)=0$ by Lemma $\ref{ZV}$. Note that $\phi_x \in \mathcal C_{i-1}(\Delta(G_x)\setminus \{z\};k)$, so $\phi_x = \partial \omega_x$ for some $\omega_x \in \mathcal C_{i}(\Delta(G_x)\setminus \{z\};k)$. Together with $(\ref{G3})$ we have
\begin{equation}\label{G4}
\psi +\partial (-e_x\wedge e_s\wedge \eta_{xs}+ e_x \wedge \omega_x+ e_y\wedge\omega_y) = \omega_x + e_s \wedge \phi_s + (\omega_y +\psi').
\end{equation}

Observe that $\omega_x \in \mathcal C_{i}(\Delta(G_x)\setminus \{z\};k) \subseteq \mathcal C_{i}(\Delta\setminus (X \setminus \{s\});k)$, thus we can write
$$\omega_x = e_s \wedge \eta_s + \eta'$$ 
where $\eta_s \in C_{i-1}(\Delta(G_s)\setminus X;k)$ and $\eta' \in C_{i}(\Delta\setminus X;k)$. By substituting it into the right hand sinde of $(\ref{G4})$ we get
\begin{equation}\label{G5}
\psi +\partial (-e_x\wedge e_s\wedge \eta_{xs}+ e_x \wedge \omega_x+ e_y\wedge\omega_y) = e_s \wedge (\phi_s+\eta_s) + (\omega_y +\psi'+\eta').
\end{equation}

Let $\zeta := \eta+ e_x\wedge e_s \wedge \eta_{xs} - e_x\wedge \omega_x - e_y \wedge \omega_y$, $\omega_s := \phi_s+\eta_s$ and $\omega' := \omega_y +\psi'+\eta'$. Together with $(\ref{G2})$ we obtain $\omega = \partial \eta + e_s \wedge \omega_s + \omega'$, and the claim $(\ref{G1})$ follows.

We now go back to prove that $\omega$ is a boundary. Together $\partial \omega = 0$ with $(\ref{G1})$ we get $\partial \omega_s = 0$. Since $G_s$ is Gorenstein without isolated vertices and that $x$ is a vertex of $G_s$, by Lemma $\ref{ZV}$ we get $H_{i-1}(\Delta(G_s)\setminus\{x\}; k) = 0$. As $\omega_s  \in C_{i-1}(G_s\setminus \{x\})$, $\omega_s = \partial \gamma_s$ for some $\gamma_s \in \mathcal C_{i}(G_s\setminus \{x\};k)$. Together with $(\ref{G1})$, this yields
\begin{equation}\label{G6}
\omega = \partial(\eta - e_s \wedge \gamma_s) + (\gamma_s + \omega').
\end{equation}

Since $\partial \omega = 0$, from $(\ref{G6})$ we have $\partial(\gamma_s + \omega') =0$. Note that $\Delta\setminus X = \st_{\Delta}(v)$, so $\h_i(\Delta\setminus X;k)=0$. Since $\gamma_s + \omega'\in \mathcal C_i(\Delta\setminus X;k)$, $\gamma_s+\omega' = \partial \gamma$ for some $\gamma \in \mathcal C_{i+1}(\Delta\setminus X;k) \subseteq \mathcal C_{i+1}(\Delta;k)$. This gives $\omega = \partial(\eta - e_s \wedge \gamma_s+\gamma)$, so $\omega$ is a boundary. Thus, $\h_i(\Delta;k)=0$.

For the remain case, $H$ is a two $3$-cycles with only one vertex in common, assume that $V(H) =\{x,y,z,s,t\}$ and $E(H) = \{xy,yz,zx, ys, st, ty\}$ (see Figure $2$). We first prove that $\h_i(\Delta;k)=0$ for all $i < d-1$. Let $\omega \in \mathcal C_i(\Delta(G);k)$ such that $\partial \omega = 0$. By the same argument as in the claim $(\ref{G1})$ we can write $\omega$ as
\begin{equation}\label{G7}
\omega = \partial \zeta + e_s \wedge \omega_s + \omega',
\end{equation}
where $\zeta\in \mathcal{C}_{i+1}(\Delta;k)$,  $\omega_s \in \mathcal{C}_{i-1}(\Delta(G_s)\setminus X)$ and $\omega' \in \mathcal{C}_{i}(\Delta\setminus X)$.

\medskip

\begin{center}
\includegraphics[scale=0.7]{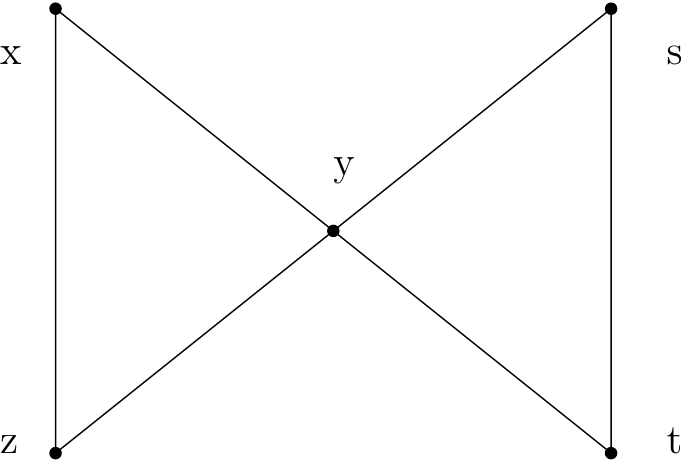}\\
\medskip
{\it Figure 2. Two triangle with one vertex in common.}
\end{center}

\medskip

Since $\partial \omega = 0$, from $(\ref{G7})$ we deduce that $\partial \omega_s = 0$. Since $z$ is a vertex of $G_s$,  $\h_{i-1}(\Delta(G_s)\setminus z;k) = 0$ by Lemma $\ref{ZV}$. Note that $\omega_s \in \mathcal C_{i-1}(\Delta(G_y)\setminus z;k)$, therefore $\omega_s= \partial \gamma_s$ for some $\gamma_s\in C_i(\Delta(G_s)\setminus z;k)$. Thus,
$$\omega = \partial (\zeta-e_s\wedge \gamma_s) +\gamma_s +\omega'.$$

Note that $V(G_s \setminus z) \cap X =\{x\}$, so we can write $\gamma_s = e_x \wedge \psi_x + \phi$ where $\psi_x \in \mathcal C_{i-1}(\Delta(G_{\{x,s\}});k)$ and $\phi\in\mathcal C_i(\Delta(G)\setminus X;k)$. Then,
\begin{equation}\label{G8}
\omega = \partial (\zeta-e_s\wedge \gamma_s) +e_x\wedge \psi_x+  \phi +\omega'.
\end{equation}
Together with $\partial \omega = 0$, this expression implies $\partial \psi_x = 0$.

On the other hand, $\Delta(G_{\{x,s\}})$ is Gorenstein of dimension $d-2$ and $i - 1  < d -2$, thus $\h_{i-1}(\Delta(G_{\{x,s\}});k)=0$ by Lemma $\ref{Reisner}$. Hence,  $\psi_x =\partial \phi_x$ for some $\phi_x\in \mathcal C_i(\Delta(G_{\{x,s\}});k)$. Together with $(\ref{G8})$ we obtain
$$\omega = \partial (\zeta-e_s\wedge \gamma_s-e_x \wedge \phi_x)  +(\phi-\phi_x+ \omega').$$

Combining with $\partial \omega = 0$ we have $\partial(\phi-\phi_x+ \omega') = 0$. On the other hand, as $\phi-\phi_x+ \omega' \in\mathcal C_i(\Delta\setminus X;k)$ and $\h_i(\Delta\setminus X;k) = 0$, we have $\phi-\phi_x+ \omega' = \partial \gamma$ for some $\gamma \in C_{i+1}(\Delta\setminus X;k)$. This gives $\omega = \partial (\zeta-e_s\wedge \gamma_s-e_x \wedge \phi_x+\gamma)$, so $\omega$ is a boundary. It follows that $\h_i(\Delta;k)=0$ for all $i < d-1$.

Finally we prove $\h_{d-1}(\Delta;k)=0$. Since $\h_i(\Delta;k)=0$ for all $i < d-1$, the Euler characteristic of $\Delta$ now becomes
$$\k(\Delta) = \sum_{i=-1}^d (-1)^i \dim_k \h_i(\Delta;k) = (-1)^{d-1} \dim_k \h_{d-1} (\Delta;k) + (-1)^d \dim_k \h_d (\Delta;k).$$
Note that $\k(\Delta) = (-1)^d$, so $\dim_k \h_{d-1}(\Delta;k) = \dim_k \h_d(\Delta;k)-1$. On the other hand, $\h_0(\Delta;k) =0$ because $0 < d-1$, therefore $\Delta$ is connected. As $G$ is  locally Gorenstein, by Lemma $\ref{C1}$ we have $\Delta$ is connected in codimension one. Now let $F$ be any face of $\Delta$ with $\dim F = d-1$ so that $|F| = \alpha(G)-1$.  Since $\Delta(G_F)$ is Eulerian with $\alpha(G_F) = 1$, $G_F$ is just one edge of $G$, say $uv$. This means that $F$ is the common face of exactly two facets $F\cup \{u\}$ and $F\cup \{v\}$ in $\Delta$. Hence, $\Delta$  is a $d$-dimensional {\it pseudomanifold} without boundary (see \cite[Definition $3.15$]{S}), and hence $\dim \h_d(\Delta;k) \leqslant 1$ by \cite[Proposition $3.16$]{S}.
Consequently, $\dim_k \h_{d-1}(\Delta;k) = \dim_k \h_d(\Delta;k)-1 \leqslant 0$. It follows that $\dim_k \h_{d-1}(\Delta;k) = 0$, so $\h_{d-1}(\Delta;k)=0$. The proof is complete.
\end{proof}

Since a planar graph is a pseudo-planar graph, by using Theorem $\ref{GlobalVersion}$ we obtain a graph-theoretical characterization of Gorenstein planar graphs. Note that a characterization of Gorenstein triangle-free planar graph is obtained in \cite{HMT}.

\begin{cor} A planar graph is Gorenstein if and only if its independence complex is Eulerian.
\end{cor}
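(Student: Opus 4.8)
The plan is to treat the two implications separately. The forward direction is immediate: if $G$ is Gorenstein then, having no isolated vertices, $\Delta(G) = \core(\Delta(G))$, so Lemma \ref{Stanley} forces $\core(\Delta(G))$ to be an Euler complex, i.e. $\Delta(G)$ is Eulerian. The content is in the converse, which I would prove by induction on $\alpha(G)$. Assuming $\Delta := \Delta(G)$ is Eulerian, Lemma \ref{E1} gives $G \in W_2$, and Lemma \ref{A1} disposes of the base case $\alpha(G) \leqslant 2$. For the inductive step put $d = \dim \Delta = \alpha(G) - 1$. Since $\Delta$ is Eulerian and equals its own core, Lemma \ref{Stanley} reduces ``$G$ Gorenstein'' to ``$\Delta$ Cohen-Macaulay'', which by Lemma \ref{Reisner} means $\h_i(\lk_\Delta F;k) = 0$ for all $F$ and all $i < \dim \lk_\Delta F$. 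For every nonempty $F$ we have $\lk_\Delta F = \Delta(G_F)$, where $G_F$ is again Eulerian, pseudo-planar, and has $\alpha(G_F) = \alpha(G) - |F| < \alpha(G)$; the induction hypothesis then makes $G_F$ Gorenstein, hence Cohen-Macaulay, so the link condition is automatic for $F \neq \emptyset$ (and, taking $F$ a single vertex, $G$ is in particular locally Gorenstein). Everything therefore collapses to the single statement
$$\h_i(\Delta;k) = 0 \quad \text{for all } i < d.$$

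To prove this I would pick a vertex $v$ minimizing $\delta^*$ and set $H := G[N(v)]$. If $H$ has an isolated vertex, Lemma \ref{A3} (with $X$ that vertex) gives the vanishing at once. Otherwise the pseudo-planar hypothesis forces $|V(H)| \leqslant 5$, and since $\Delta$ is Eulerian Corollary \ref{E2} yields $\k(\Delta(H)) = 0$. At this point Lemma \ref{A4} is the decisive combinatorial input: either $H$ carries an independent set $X$ such that every localization $H_S$ (over nonempty independent $S$ in $H \setminus X$) is empty or has an isolated vertex, in which case Lemma \ref{A3} again settles the vanishing, or else $H$ is one of exactly two graphs, the pentagon with a single chord or two triangles sharing one vertex.

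These two exceptional configurations are where I expect the real work to lie, since no choice of $X$ triggers Lemma \ref{A3} and one must argue directly on chains. Given a cycle $\omega \in \mathcal C_i(\Delta;k)$ with $\partial \omega = 0$, the strategy is to show $\omega$ is a boundary by peeling off the vertices of $H$ one at a time. I would first use Lemma \ref{A2} to move $\omega$, modulo a boundary, into $\mathcal C_i(\Delta \setminus \{z,t\};k)$ for a well-chosen independent pair; then I would successively extract the remaining vertices of $H$, at each step invoking that the relevant localization ($G_{\{x,s\}}$, $G_y$, $G_x$, $G_s$, and so on) is Gorenstein and that the restriction to the extracted vertex is a cone, so that Lemma \ref{ZV} or Lemma \ref{Reisner} provides exactly the vanishing needed to rewrite the offending coefficient as a boundary. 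The end result is a chain supported on $\Delta \setminus V(H) = \st_\Delta(v)$, which is a cone and hence acyclic, so $\omega$ is a boundary.

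The bookkeeping in this peeling is the delicate point: the order in which the vertices are removed must be chosen so that at each stage the coefficient to be killed lives in a cone-restriction where Lemma \ref{ZV} applies. Moreover I expect the scheme to clear all $i < d$ for the pentagon-with-chord but only $i < d-1$ for the two-triangles graph, leaving the top degree $i = d-1$ to be handled separately. There I would use $\k(\Delta) = (-1)^d$ together with the already-established vanishing in degrees below $d-1$ to relate $\dim_k \h_{d-1}(\Delta;k)$ to $\dim_k \h_d(\Delta;k)$; then, since $\h_0(\Delta;k) = 0$ makes $\Delta$ connected, Lemma \ref{C1} upgrades this to connectedness in codimension one, and the well-covered structure (each codimension-one face being the common facet of exactly two facets, as the corresponding $G_F$ is a single edge) identifies $\Delta$ as a pseudomanifold without boundary. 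That bounds $\dim_k \h_d(\Delta;k) \leqslant 1$ and forces $\h_{d-1}(\Delta;k) = 0$, completing the argument.
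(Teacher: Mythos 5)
Your proposal is correct and follows the paper's own route essentially verbatim: it reproduces the proof of Theorem \ref{GlobalVersion} (induction on $\alpha(G)$; reduction to $\h_i(\Delta;k)=0$ for $i<d$ via Lemmas \ref{E1}, \ref{A1}, \ref{Stanley}, \ref{Reisner}; the case split on $H=G[N(v)]$ via Corollary \ref{E2} and Lemmas \ref{A3}, \ref{A4}; the chain-peeling for the two exceptional neighbourhoods; and the pseudomanifold argument in top degree). The only step you leave implicit is the verification that a planar graph is pseudo-planar (the paper's Remark \ref{PG}: every $G_S$ is an induced, hence planar, subgraph, so it has a vertex of degree at most $5$, and it cannot be $2K_2 * 2K_2$ since that graph has a $K_{3,3}$ minor), which is the one point where the corollary requires anything beyond the theorem.
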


We conclude this paper with a simple construction of Gorenstein graphs and Gorenstein pseudo-planar graphs from the given ones (it is similar to Pinter \cite{Pi}). 

\begin{prop} \label{CT2} Let $H$ be a Gorenstein graph and let $x$ be a non isolated vertex of $H$. Let the three new points be $a,b$ and $c$. Join $a$ to $b$ and every neighbor of $x$; join $b$ to $c$; and  join $c$ to $x$. Let $G$ be the graph obtained from this construction. Then,
\begin{enumerate}
\item $G$ is a Gorenstein graph with $\alpha(G) = \alpha(H)+1$.
\item $G$ is pseudo-planar if $H$ is pseudo-planar.
\end{enumerate}
\end{prop}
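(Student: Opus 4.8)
The plan is to handle both parts through a single structural analysis of the localizations $G_S$, the crucial observation being that $G_b=H$ while $\Delta(G\setminus b)$ is contractible. Write $N:=N_H(x)$, so in $G$ the new vertices satisfy $N_G(a)=N\cup\{b\}$, $N_G(b)=\{a,c\}$, $N_G(c)=\{b,x\}$ and $N_G(x)=N\cup\{c\}$. First I would record the elementary combinatorics: among $\{a,b,c\}$ (an induced path $a-b-c$) an independent set has at most two elements, and the only such pair $\{a,c\}$ forces the rest of an independent set into $V(H_x)$; together with $\alpha(H_x)=\alpha(H)-1$ (Lemma \ref{LC}, as $H$ is well-covered) this gives $\alpha(G)=\alpha(H)+1$, and a short case check according to $S\cap\{a,b,c\}$ shows every maximal independent set has this size. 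Thus $G$ is well-covered with no isolated vertices, so $\core(\Delta(G))=\Delta(G)$, and by Lemma \ref{Stanley} it suffices to prove that $\Delta(G)$ is Cohen--Macaulay and an Euler complex.

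The homological input comes from deleting $b$. Since the only edges meeting $\{a,b,c\}$ are the new ones, $G_b=H$, so $\lk_{\Delta(G)}(b)=\Delta(H)$ is a $k$-homology sphere because $H$ is Gorenstein. Moreover $\Delta(G\setminus b)$ is contractible: in $G\setminus b$ one has $N(a)=N\subseteq N\cup\{c\}=N(x)$, so by the standard folding (neighbourhood-domination) lemma $\Delta(G\setminus b)$ is homotopy equivalent to $\Delta(G\setminus\{b,x\})$, in which $c$ has become isolated, so this is a cone. Feeding $\st_{\Delta(G)}(b)$ (a cone) and $\Delta(G)\setminus b$ (contractible) into the Mayer--Vietoris sequence of $\Delta(G)=\st_{\Delta(G)}(b)\cup(\Delta(G)\setminus b)$ yields $\widetilde H_i(\Delta(G);k)\cong \widetilde H_{i-1}(\Delta(H);k)$ for all $i$. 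As $\dim\Delta(G)=\alpha(H)=\dim\Delta(H)+1$, this simultaneously gives $\widetilde H_i(\Delta(G);k)=0$ for $i<\dim\Delta(G)$ and, after the matching computation of reduced Euler characteristics, $\widetilde\chi(\Delta(G))=-\widetilde\chi(\Delta(H))=(-1)^{\dim\Delta(G)}$.

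To upgrade this to the full conditions of Lemmas \ref{Reisner} and \ref{Stanley} I would induct on $\alpha(H)$ and classify $G_S$ for nonempty independent $S$ by $S\cap\{a,b,c\}$. When $b$ is deleted or dominated one finds $G_S=H_{S'}$ for a suitable independent $S'$ of $H$, which is Gorenstein; several cases split off an isolated edge, giving $G_S\cong K_2\sqcup H_{S'}$, whose independence complex is the join (a suspension) of two homology spheres and is again Gorenstein; and when all of $a,b,c,x$ survive, $G_S$ is precisely the present construction applied to the Gorenstein graph $H_{S_H}$ at $x$, with $\alpha(H_{S_H})<\alpha(H)$, hence Gorenstein by the induction hypothesis. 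Thus every proper link $\lk_{\Delta(G)}(S)=\Delta(G_S)$ is a homology sphere; combined with the previous paragraph (the link of $\emptyset$) this gives both the homology vanishing of Lemma \ref{Reisner} and the Euler identity in every link, so $\Delta(G)$ is Cohen--Macaulay and Eulerian, whence Gorenstein with $\alpha(G)=\alpha(H)+1$.

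The delicate point, which I expect to be the main obstacle, is the ``all survive'' case: a priori $x$ could become isolated in $H_{S_H}$, in which case $\{a,b,c,x\}$ detaches as an induced path on four vertices and $\Delta(G_S)$ becomes contractible, destroying the Euler condition. This degenerate configuration is exactly what the Gorensteinness of $H$ forbids: if $x$ were isolated in $H_{S_H}$ with $S_H\cup\{x\}$ independent, then $\lk_{\Delta(H)}(S_H)=\Delta(H_{S_H})$ would be a cone and so have vanishing reduced Euler characteristic, contradicting that it is Eulerian. The same principle (a Gorenstein graph has no localization with an isolated vertex below the top dimension) guarantees that the graphs $H_{S'}$ above are genuinely core. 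Finally, part $(2)$ is almost immediate from this classification: whenever $b$ survives in $G_S$ its neighbourhood lies in the independent set $\{a,c\}$, so $G[N_{G_S}(b)]$ is edgeless, $\delta^*(b)=0$, and therefore $\delta^*(G_S)\leqslant 5$ and $G_S\not\cong 2K_2*2K_2$ (the latter being $5$-regular with $\delta^*\equiv 5$); and when $b$ does not survive, $G_S$ is $H_{S'}$ or $K_2\sqcup H_{S'}$, which is pseudo-planar because $H$ is. Hence $G$ is pseudo-planar.
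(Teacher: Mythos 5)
Your argument is correct, and its inductive skeleton coincides with the paper's: induction on $\alpha(H)$, a classification of the localizations $G_S$ according to how $S$ meets $\{a,b,c,x\}$ (each $G_S$ being $H_{S'}$, $K_2$ disjoint union a localization of $H$, or the same construction applied to $H_S$), and the key observation that Gorensteinness of $H$ forbids $x$ from becoming isolated in $H_S$ --- the paper extracts this from the $W_2$ property, you from the Euler condition on $\lk_{\Delta(H)}(S)$; either works. Where you genuinely diverge is in handling $\Delta(G)$ itself, i.e.\ the link of the empty face. The paper exploits the fact that $N_G(b)=\{a,c\}$ is independent and feeds this into its own machinery: Corollary \ref{E2} turns $\k(\Delta(G[N_G(b)]))=0$ into the Euler condition, and Lemma \ref{A3} (via the remark following it) gives $\h_i(\Delta(G);k)=0$ for $i<\dim\Delta(G)$. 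You instead decompose $\Delta(G)=\st_{\Delta(G)}(b)\cup(\Delta(G)\setminus b)$ and run Mayer--Vietoris, using the fold lemma ($N(a)\subseteq N(x)$ in $G\setminus b$, so $\Delta(G\setminus b)$ retracts to a cone over $c$) to get $\h_i(\Delta(G);k)\cong\h_{i-1}(\Delta(H);k)$ and the Euler characteristic in one stroke. Your route is more self-contained topologically and yields the sharper homology isomorphism, at the cost of importing the fold lemma, which the paper never states; the paper's route recycles the lemmas already built for Theorem \ref{GlobalVersion}. Two small presentational points: when neither $b$ nor $c$ survives in $G_S$ one in fact always gets $G_S\cong H_{S'}$ (possibly with $a$ playing the role of $x$), so the $K_2\sqcup H_{S'}$ alternative in that clause of your part-(2) argument is vacuous --- the disjoint-edge localizations occur precisely when $b$ or $c$ survives, where your degree bound already applies; and the harmless reduction to $H$ without isolated vertices (needed for $\core(\Delta(G))=\Delta(G)$) should be stated at the outset, as the paper does.
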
 
\begin{proof} We may assume that $H$ has no isolated vertices so that $H$ is in $W_2$. We prove the proposition by induction on $\alpha(H)$. If $\alpha(H) = 1$, then $H = K_2$. In this case, $G$ is just a pentagon, so the proposition holds.

Assume that $\alpha(H) \geqslant 2$. We first prove $G$ is Gorenstein. Let $v$ be an arbitrary vertex of $G$. We now claim that $G_v$ is Gorenstein with $\alpha(G_v)=\alpha(H)$ and without isolated vertices. Indeed, observe that if $v\notin \{a,b,c,x\}\cup N_H(x)$ then $v\in V(H_x)$. We consider the four possible cases:

{\it Case 1:} $v\in\{b,c\}$. $G_v \cong H$, and the claim holds.

{\it Case 2:} $v \in\{a,x\}$. $G_v$ is isomorphic to the disjoint union of $H_x$ and $K_2$, and the claim holds

{\it Case 3:} $v \in N_H(x)$. $G_v$ is the disjoint union of $H_v$ and the edge $bc$, and the claim holds

{\it Case 4:} $v\in V(H_x)$. In this case, $x$ is a vertex of $H_v$. Since $H$ is in $W_2$, $x$ is not an isolated vertex in $H_v$. In this case $G_v$ is exactly the graph obtained by the construction in the lemma that starts with $H_v$. Since $\alpha(H_v)=\alpha(H)-1$, by the induction hypothesis we have $G_v$ is Gorenstein with $\alpha(G_v) =\alpha(H_v)+1 = \alpha(H)$. Notice that $G_v$ has no isolated vertices. This proves the claim.

We now turn to prove that $G$ is Gorenstein. By the claim, $G$ is semi-Euler and locally Gorenstein.  Since $G[N_G(b)]$ is just two isolated vertices $a$ and $c$, we have $\k(\Delta(G[N_G(b)]) = 0$. By Corollary $\ref{E2}$ we have $\Delta(G)$ is Eulerian. By Lemmas $\ref{Reisner}$ and $\ref{Stanley}$, $G$ is Gorenstein if $\h_i(\Delta(G);k) = 0$ for $i <\dim(\Delta(G))$. This follows from Lemma $\ref{A3}$ since $G[N_G(b)]$ is totally-disconnected. Thus, $G$ is Gorenstein. Clearly, $\alpha(G) = \alpha(G_b)+1 = \alpha(H)+1$.

Assume that $H$ is pseudo-planar. We next prove that $G$ is pseudo-planar as well. Let $S$ be arbitrary independent set of $G$. We need to prove that:
\begin{enumerate}
\item $\delta^*(G_S) \leqslant 5$; and
\item $G_S \not \cong 2K_2 * 2 K_2$.
\end{enumerate}

Observe that each vertex of $2K_2 * 2 K_2$ has degree $5$. Since $\deg_G(b) = \deg_G(c)=2$, two conditions above satisfy if  either $b$ or $c$ is a vertex of $G_S$.

Hence, we may assume that both $b$ and $c$ are not vertices of $G_S$. It follows that either $a, x\in S$ or $S \cap \{b,c\} \ne \emptyset$.

If $a,x\in S$, then let $S' := S\setminus \{a\}$. Observe that $S'\in\Delta(H)$ and $G_S = H_{S'}$, hence two conditions satisfy in this case.

If either $b$ or $c$ in $S$. By symmetry in $G$, we may assume that $b\in S$. Let $S' := S\setminus\{b\}$. Then, $S'\in\Delta(H)$ and $G_S = H_{S'}$, hence two conditions also satisfy in this case. The proof of the proposition is complete.
\end{proof}

\begin{exm} Start with $R_3$ (the name is due to \cite{PL}), we have the following Gorenstein pseudo-planar graph (see Figure $3$). Note that the obtained graph  is not planar because it has a minor $K_5$.

\medskip

\begin{center}

\includegraphics[scale=0.7]{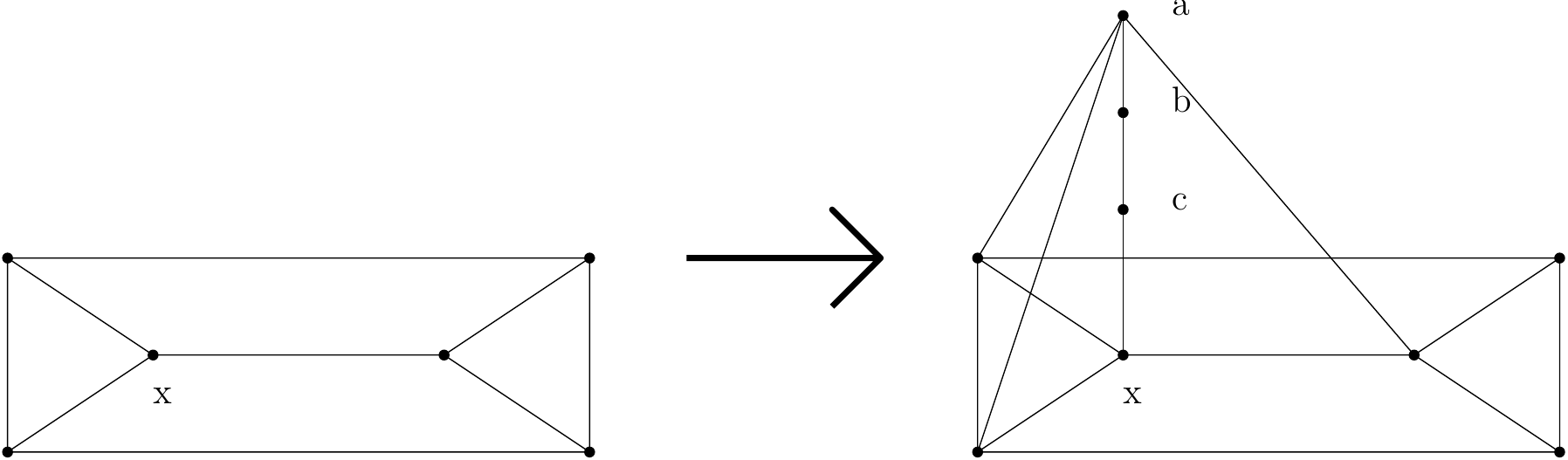}\\

\medskip

{\it Figure 3. The Gorenstein pseudo-planar graph obtained from $R_3$.}
\end{center}
\end{exm}

\subsection*{Acknowledgment} This work is partially supported by NAFOSTED (Vietnam) under the grant number 101.04 - 2015.02.


\end{document}